\numberwithin{equation}{section}
\newtheorem{thm}{Theorem}[section]
\newtheorem{lem}[thm]{Lemma}
\newtheorem{prop}[thm]{Proposition}
\newtheorem{cor}[thm]{Corollary}
\theoremstyle{definition}
\newtheorem{conj}[thm]{Conjecture}
\theoremstyle{remark}
\newtheorem{rem}[thm]{Remark}
\newtheorem*{acknowledgements}{Acknowledgements}
\DeclareMathOperator{\colim}{colim}
\def\Z{\mathbb{Z}}
\def\Q{\mathbb{Q}}
\def\F{\mathbb{F}}
\def\etwo{E}
\newcommand{\itemr}[1]{\item[\rm{(#1)}]}
\def\thh{\mathrm{THH}}
\def\bthh{\overline{\thh}}
\def\ra{\rightarrow}
\def\leq{\leqslant}
\def\geq{\geqslant}
\def\HH{\mathsf{HH}}
\newcommand{\xr}{\xrightarrow}
\def\ie{\emph{i.e.}}
\def\eg{\emph{e.g.}}
\def\id{\mathrm{id}}
\begin{document}
\title[Towards THH of Johnson-Wilson spectra]{%
Towards topological Hochschild homology of Johnson-Wilson spectra}
\author{Christian Ausoni}
\address{LAGA (UMR7539), Institut Galil\'ee, 
Universit\'e Paris 13 Sorbonne-Paris-Cit\'e, 99 avenue J.-B. Cl\'ement,
93430 Villetaneuse, France}
\email{ausoni@math.univ-paris13.fr}
\urladdr{http://www.math.univ-paris13.fr/~ausoni/}

\author{Birgit Richter}
\address{Fachbereich Mathematik der Universit\"at Hamburg,
Bundesstra{\ss}e 55, 20146 Hamburg, Germany}
\email{birgit.richter@uni-hamburg.de}
\urladdr{http://www.math.uni-hamburg.de/home/richter/}
\date{\today}
\keywords{Topological Hochschild homology, Johnson-Wilson spectra,
  $E_\infty$-structures on ring spectra, chromatic squares}
\subjclass[2000]{55P43, 55N35}

\begin{abstract}
We offer a complete description of $\thh(E(2))$ under the assumption
that the Johnson-Wilson spectrum $E(2)$ at a chosen odd prime carries
an $E_\infty$-structure. We also place $\thh(E(2))$ in a cofiber
sequence $E(2) \ra \thh(E(2))\ra  \bthh(E(2))$ and describe $\bthh(E(2))$
under the assumption that $E(2)$ is an $E_3$-ring spectrum. We state general
results about the $K(i)$-local behaviour of $\thh(E(n))$ for all $n$
and $0 \leq i \leq n$. In particular, we compute $K(i)_*\thh(E(n))$.
\end{abstract}
\maketitle

\section{Introduction}

The first Johnson-Wilson spectrum $E(1)$ at a prime $p$ is the Adams summand of
$p$-local periodic complex topological $K$-theory $KU_{(p)}$. It is known that
it carries a
unique $E_\infty$-structure \cite{mccs,br}, thus $\thh(E(1))$ is a
commutative $E(1)$-algebra spectrum. McClure and Staffeldt show that
the unit  map
$E(1) \ra \thh(E(1))$ is a $K(1)$-local equivalence, hence its cofiber
$\bthh(E(1))$ is a rational spectrum. It is easy to calculate the
rational homology of $\thh(E(1))$ as
$$ H\Q_*\thh(E(1)) \cong \Q[v_1^{\pm 1}] \otimes_\Q
\Lambda_\Q(dv_1)$$
using the B\"okstedt spectral sequence with $E^2$-term 
$$ E^2_{*,*} = \HH_{*,*}^\Q(\Q[v_1^{\pm 1}]).$$ 
There is a map
$$ \Sigma^{2p-1} E(1) \ra  \thh(E(1)) \ra \bthh(E(1))$$
that factors through
$\Sigma^{2p-1} E(1)_\Q\ra \bthh(E(1))$ since $\bthh(E(1))$ is rational, and
that is defined such that the latter map is an equivalence detecting the
$H\Q_*E(1)$-summand generated by
$dv_1$. Since the unit map $E(1)\ra\thh(E(1))$ splits, this yields a splitting
\cite[Theorem 8.1]{mccs} 
$$ \thh(E(1)) \simeq E(1) \vee \Sigma^{2p-1} E(1)_\Q$$
as $E(1)$-modules. This computation was also carried out for
$KU_{(p)}$~\cite{ausoni-thh}, and pushed further to provide formulas for
$\thh(KU)$ as a commutative $KU$-algebra by Stonek~\cite{stonek}.

In this paper, we consider
the higher Johnson-Wilson spectrum $E(n)$ with coefficient ring
$$
E(n)_*=\Z_{(p)}[v_1,\dots,v_{n-1},v_n,v_n^{-1}]
$$
for an arbitrary value of $n\geq1$ and $p$ an odd prime. A main motivation here is to investigate whether
the spectrum $\thh(E(n))$ also splits into copies of $E(n)$ and its lower
chromatic localizations, generalizing McClure and Staffeldt's intriguing
transchromatic result.

As a first step, we compute 
the Hochschild homology $\HH_*^{K(i)_*}(K(i)_*E(n))$ of $K(i)_*E(n)$, 
where $K(i)$ is the $i$th Morava $K$-theory, for
$0\leq i\leq n$, at an odd prime, see Theorem~\ref{thm:hh}. 
We shy away from the prime $2$ because Morava $K$-theory is not homotopy
commutative at the prime $2$.
Theorem~\ref{thm:hh} yields a computation of $K(i)_*\thh(E(n))$ under the
modest assumption that $E(n)$ admits an $E_3$-structure.

We then focus on $E(2)$, and show 
in Theorem~\ref{thm:thhofe2} 
that under the same commutativity assumption 
$\thh(E(2))$ sits in a cofiber sequence
$$
E(2)\to\thh(E(2))\to
\Sigma^{2p-1}L_1E(2)\vee
\Sigma^{2p^2-1}E(2)_\Q\vee
\Sigma^{2p^2+2p-2}E(2)_\Q\,,
$$
where $L_1E(2)$ denotes the Bousfield localization of $E(2)$ with respect to
$E(1)$. If the unit $E(2)\to \thh(E(2))$ splits, we then get a decomposition of
$\thh(E(2))$ into four
summands, a higher analogue of McClure-Staffeldt's formula for $\thh(E(1))$. 

\begin{rem}
To study $\thh(E(n))$ by means of the B{\"o}kstedt spectral sequence, we need
sufficient commutativity of $E(n)$. In this remark, we summarize what is known
about multiplicative structures on $E(n)$ and related spectra.
Basterra and Mandell showed~\cite{bm13} that the Brown-Peterson spectrum $BP$
admits an $E_4$ structure. 
The Johnson-Wilson spectra $E(n)$ are built out 
of the $BP\langle n \rangle=BP/(v_i|i\geq n+1)$ by inverting $v_n$. 
In \cite[Theorem 1.1.2]{lawson} Tyler Lawson shows that the Brown-Peterson spectrum $BP$
and the spectra $BP\langle n \rangle$ for $n \geq 4$ at 
the prime $2$ do not possess an
$E_{12}$-structure. Andrew
Senger~\cite[Theorem 1.2]{senger}  extends Lawson's result to odd primes 
$p$, and shows 
that $BP$ and the $BP\langle n\rangle$'s (for $n \geq 4$) do not have
an $E_{2(p^2+2)}$-structure. In particular, the $BP\langle n\rangle$'s are not
$E_\infty$-ring spectra at any prime for $n \geq 4$.  
Hence if $E(n)$ actually possesses an $E_\infty$-structure for $n \geq 4$,
then this structure does not come from one on the $BP\langle n\rangle$'s. 
In \cite[Proposition 8.2]{r-bp} it is proven that $E(n)$ at a prime $p$
possesses at least a $(2p-1)$-stage structure. It is unclear how such a
structure relates to the $E_n$-hierarchy, but Barwick conjectures
\cite[p.~1948]{barwick} that a $(2p-1)$-stage structure corresponds to an
$A^{2p-1}_{2p}$-structure which in turn is a filtration piece of an
$E_{2p-1}$-structure. 

At the prime $2$, Lawson and Naumann~\cite{ln1} show that there is an
$E_\infty$-model of $BP\langle 2\rangle$ and Hill and Lawson~\cite{hl} prove
that $BP\langle 2\rangle$ at the prime $3$ possesses a model as an 
$E_\infty$-ring spectrum. 
With~\cite[Theorem A.1]{mnn} this yields
$E_\infty$-structures on the corresponding Johnson-Wilson spectra
$E(2)$ at these primes. 
\end{rem}

\begin{acknowledgements}
The first named author acknowledges support from the project
ANR-16-CE40-0003 ChroK. 
The second named author thanks the University of Paris 13 for its
hospitality and for the 
possibility of a research stay as \emph{professeur invit\'ee}. Both 
authors benefited from a stay at the Hausdorff Institute for Mathematics in
Bonn during the Trimester Program on \emph{$K$-theory and Related Fields}.   

We thank Paul Goerss for a crucial hint that simplified our
original \'etaleness argument, and
Agn{\`e}s Beaudry, Gerd Laures, Mike Mandell, John Rognes, and Vesna Stojanoska 
for helpful comments. 
\end{acknowledgements}

\section{Rationalized  $E(n)$}
For  $n \geq 1$ the homotopy algebra of $L_{K(0)}E(n) = E(n)_{\Q}$ is
$\Q[v_1, \ldots, 
v_{n-1}, v_n^{\pm 1}]$ and its algebra of cooperations is 
$$ \pi_*(E(n)_{\Q} \wedge E(n)_{\Q}) \cong \pi_*E(n)_{\Q} \otimes_\Q
\pi_*E(n)_{\Q} \cong \Q[v_1, \ldots,
v_{n-1}, v_n^{\pm 1}, v'_1, \ldots, v'_{n-1}, {v'_n}^{\pm 1}]. $$ 
This implies the following result. 
\begin{lem}
There is a unique $E_\infty$-ring
structure on $E(n)_{\Q}$ for all $n \geq 1$. 
\end{lem}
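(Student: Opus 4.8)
The plan is to exploit that $E(n)_\Q$ is a rational spectrum, so that an $E_\infty$-ring structure on it is the same datum as an $E_\infty$-$H\Q$-algebra structure, and then to build the algebra by hand as a localization of a free $E_\infty$-$H\Q$-algebra. Write $A_* = \Q[v_1,\dots,v_{n-1},v_n^{\pm 1}]$ and $P_* = \Q[v_1,\dots,v_n]$, so that $A_* = P_*[v_n^{-1}]$ is a localization, hence smooth (indeed étale over the localizing element) over $\Q$. The decisive feature of the rational setting, and the input I would isolate first, is that in characteristic zero the free $E_\infty$-$H\Q$-algebra on an \emph{even} generator has strictly polynomial homotopy: for $M=\Sigma^{d}H\Q$ with $d$ even the $\Sigma_k$-action on $M^{\otimes k}$ is trivial and the rational homotopy orbits $(M^{\otimes k})_{h\Sigma_k}$ reduce to coinvariants concentrated in degree $dk$, so the free algebra on $\bigoplus_{i=1}^n \Sigma^{|v_i|}H\Q$ has homotopy $P_*$.

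\emph{Existence.} Let $P$ be that free $E_\infty$-$H\Q$-algebra, so $\pi_*P\cong P_*$ with $v_i$ the image of the $i$-th generator. Inverting $v_n$ by the (smashing, symmetric monoidal) localization of $E_\infty$-rings at a homotopy element produces an $E_\infty$-$H\Q$-algebra $P[v_n^{-1}]$ with $\pi_* P[v_n^{-1}]\cong A_*$. This realizes $E(n)_\Q$ as an $E_\infty$-ring.

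\emph{Uniqueness.} Given any $E_\infty$-$H\Q$-algebra $R$ with $\pi_*R\cong A_*$, I would choose maps $\Sigma^{|v_i|}H\Q\to R$ representing the classes $v_i$; by the free-forgetful adjunction these assemble to a map of $E_\infty$-$H\Q$-algebras $P\to R$ inducing on homotopy the localization homomorphism $P_*\to P_*[v_n^{-1}]=A_*$. Since $v_n$ acts invertibly on $\pi_*R$, the algebra $R$ is $v_n$-local, so $P\to R$ factors uniquely through $P[v_n^{-1}]\to R$ by the universal property of the localization; this last map is an isomorphism on homotopy and therefore an equivalence. Hence any two $E_\infty$-structures with homotopy $A_*$ agree up to equivalence.

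The hard part, and the only step that is more than formal manipulation, is the identification $\pi_*P\cong P_*$ via the collapse of the symmetric-power homotopy orbits; this is precisely where the rationality hypothesis is indispensable, since $p$-locally the Dyer--Lashof operations would enlarge the homotopy of the free algebra. To strengthen "unique up to equivalence" into the assertion that the whole space of $E_\infty$-structures is contractible, I would instead run Goerss--Hopkins obstruction theory: the obstruction and indeterminacy groups are the André--Quillen cohomology groups $\mathrm{AQ}^s(A_*/\Q;A_*)$ for $s>0$, and these vanish because $A_*$ is smooth (étale as a localization) over $\Q$, so that its cotangent complex is concentrated in degree $0$. This smoothness is exactly the phenomenon reflected in the displayed flat splitting $\pi_*(E(n)_\Q\wedge E(n)_\Q)\cong \pi_*E(n)_\Q\otimes_\Q \pi_*E(n)_\Q$, and its étaleness consequence is the technical heart of the rigidity statement.
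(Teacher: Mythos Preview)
Your argument is correct and takes a genuinely different route from the paper's. The paper goes directly to Robinson's obstruction theory: the obstructions to existence and uniqueness of an $E_\infty$-structure on $E(n)_\Q$ lie in the Gamma cohomology of the cooperations $\pi_*(E(n)_\Q\wedge E(n)_\Q)\cong A_*\otimes_\Q A_*$ as an $A_*$-algebra, which in characteristic zero coincides with Andr\'e--Quillen cohomology and vanishes in positive degrees by smoothness. Your primary argument instead builds the $E_\infty$-structure by hand as a localization of a free commutative $H\Q$-algebra, using that rationally the free $E_\infty$-algebra on even cells is honestly polynomial; uniqueness then falls out of the universal properties of free algebras and of inverting a homotopy element. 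This is more elementary and more explicit---it makes the structure visible rather than merely unobstructed---at the cost of only yielding uniqueness up to (non-canonical) equivalence, which you correctly flag. One point you leave implicit is why your $P[v_n^{-1}]$ agrees with $E(n)_\Q$ as a homotopy ring spectrum and not just abstractly on $\pi_*$; this is fine because over $H\Q$ a homotopy-commutative ring spectrum is determined by its homotopy ring, but it deserves a sentence. Finally, your closing obstruction-theory sketch is essentially the paper's argument, though note that Robinson's setup (which the paper invokes) places the obstructions in the cohomology of $A_*\otimes_\Q A_*$ over $A_*$ rather than of $A_*$ over $\Q$; both algebras are smooth over their base, so either formulation gives the vanishing you need.
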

\begin{proof}
The obstruction groups for such an  $E_\infty$-ring
structure on $E(n)_{\Q}$ are contained in the Gamma cohomology groups of
$\pi_*(E(n)_{\Q} \wedge E(n)_{\Q})$ as a $\pi_*E(n)_{\Q}$-algebra
\cite[Theorem 5.6]{robinson}. As
we work in characteristic zero, Gamma cohomology agrees with
Andr\'e-Quillen cohomology \cite[Corollary 6.6]{RobWh}. The algebra
$\Q[v_1, \ldots, 
v_{n-1}, v_n^{\pm 1}, v'_1, \ldots, v'_{n-1}, {v'_n}^{\pm 1}]$ is smooth
over $\Q[v_1, \ldots,
v_{n-1}, v_n^{\pm 1}]$ and therefore Andr\'e-Quillen cohomology is
concentrated in cohomological degree zero where it consists of 
derivations. The obstructions for existence and uniqueness of an $E_\infty$-ring
structure on $E(n)_{\Q}$ are concentrated in degrees bigger than
zero. 
\end{proof}
As $E_\infty$-ring structures can be rigidified to commutative
ring structures (see \eg,  \cite[II.3]{ekmm}), we pass to the world of
commutative ring spectra from now on. 

Topological Hochschild homology of a ring spectrum $A$ can be modelled
as the geometric 
realization of a simplicial spectrum. Using the inclusion of the $1$-skeleton,
McClure and Staffeldt \cite[\S
3]{mccs} construct a map 
\begin{equation}\label{sigma}
\sigma\colon  \Sigma A \ra \thh(A)\,.
\end{equation}
For a commutative ring spectrum $A$ the multiplication maps from $A^{\wedge
  n+1}$ to $A$ give rise to a map of commutative $A$-algebra spectra from 
$\thh(A)$ to $A$. Composing this map with the map $A \ra \thh(A)$
gives the identity, hence we obtain a splitting of $A$-modules
\begin{equation*}
\thh(A) \simeq A \vee \bthh(A)
\end{equation*}
where $\bthh(A)$ is the cofiber. The latter spectrum
inherits the structure of a non-unital commutative $A$-algebra. In our
case this implies the following result. 
\begin{cor} \label{cor:cofnuca}
  The topological Hochschild homology of $E(n)_\Q$ splits, as an
  $E(n)_\Q$-module, as 
\[ \thh(E(n)_\Q) \simeq E(n)_\Q \vee \bthh(E(n))_\Q\] 
where $\bthh(E(n))_\Q$ is the cofiber of the unit map $E(n)_\Q \ra
\thh(E(n)_\Q) \simeq \thh(E(n))_\Q$. Moreover, the spectrum $\bthh(E(n))_\Q$ is a non-unital commutative
$E(n)_\Q$-algebra. 
\end{cor}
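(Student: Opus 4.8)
The plan is to obtain this corollary as a direct instance of the general splitting discussed just above, applied to the commutative ring spectrum $A = E(n)_\Q$. The preceding Lemma endows $E(n)_\Q$ with a (unique) $E_\infty$-ring structure, which we rigidify to a strictly commutative ring structure; this is exactly the input the general argument requires. Thus the multiplication $\thh(E(n)_\Q) \ra E(n)_\Q$ is a map of commutative $E(n)_\Q$-algebras whose composite with the unit $E(n)_\Q \ra \thh(E(n)_\Q)$ is the identity, so the unit is a split monomorphism of $E(n)_\Q$-modules. This immediately yields the wedge decomposition with $\bthh(E(n))_\Q$ the cofiber, and the cofiber inherits the non-unital commutative $E(n)_\Q$-algebra structure, again from the general discussion. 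So almost all of the statement is formal once commutativity is in hand.

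The one genuine point to verify is the identification $\thh(E(n)_\Q) \simeq \thh(E(n))_\Q$, i.e.\ that forming $\thh$ commutes with rationalization. First I would recall that rationalization $(-)_\Q = (-) \wedge \mathbb{S}_\Q$ is a smashing localization, so it preserves both smash products and colimits; in particular it preserves the geometric realization of the cyclic bar construction that computes $\thh$. The key computation is at the level of simplices: since $(\mathbb{S}_\Q)^{\wedge(q+1)} \simeq \mathbb{S}_\Q$, we have
\[
(E(n)_\Q)^{\wedge(q+1)} = (E(n) \wedge \mathbb{S}_\Q)^{\wedge(q+1)} \simeq E(n)^{\wedge(q+1)} \wedge \mathbb{S}_\Q = (E(n)^{\wedge(q+1)})_\Q\,,
\]
so the simplicial spectrum computing $\thh(E(n)_\Q)$ is the levelwise rationalization of the one computing $\thh(E(n))$. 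Passing to geometric realization and using that realization commutes with the smash factor $\mathbb{S}_\Q$ gives $\thh(E(n)_\Q) \simeq \thh(E(n))_\Q$.

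Finally, I would check that the two possible readings of the symbol $\bthh(E(n))_\Q$ agree: it is both the cofiber of the unit $E(n)_\Q \ra \thh(E(n)_\Q)$ and the rationalization of $\bthh(E(n))$. This follows because rationalization is exact, so it carries the defining cofiber sequence $E(n) \ra \thh(E(n)) \ra \bthh(E(n))$ to the cofiber sequence $E(n)_\Q \ra \thh(E(n))_\Q \ra \bthh(E(n))_\Q$, which under the identification above is precisely the cofiber sequence realizing the splitting. The main (and only) obstacle is the bookkeeping around the smashing-localization argument: making sure the equivalence $\thh(E(n)_\Q) \simeq \thh(E(n))_\Q$ is one of $E(n)_\Q$-modules, indeed of non-unital commutative algebras, and not merely of underlying spectra. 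This amounts to observing that all the maps above are maps of commutative ring spectra, since $\mathbb{S}_\Q$ is an $E_\infty$-ring and hence $(-) \wedge \mathbb{S}_\Q$ is lax symmetric monoidal.
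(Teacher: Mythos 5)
Your proposal is correct and follows essentially the same route as the paper: the paper states this corollary as an immediate instance of the general splitting $\thh(A)\simeq A\vee\bthh(A)$ for a commutative ring spectrum $A$, applied to $A=E(n)_\Q$ with the rigidified $E_\infty$-structure from the preceding lemma, which is exactly your first paragraph. Your remaining two paragraphs (the smashing-localization argument for $\thh(E(n)_\Q)\simeq\thh(E(n))_\Q$ and the identification of the two readings of $\bthh(E(n))_\Q$) supply standard details that the paper leaves implicit in the statement, and they are correct.
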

In the sequel, we follow Loday~\cite[Definition~E.1]{loday} for the definition
of {\'e}tale algebras.
It is straightforward to calculate the topological Hochschild homology
of $E(n)_\Q$. 
\begin{prop}
\begin{equation} \label{eq:thhenq}
\pi_*\thh(E(n))_\Q \cong \Q[v_1, \ldots, 
v_{n-1}, v_n^{\pm 1}]  \otimes \Lambda_\Q(dv_1, \ldots, dv_n)
\end{equation}
with $|dv_i| = 2p^i-1$. 
\end{prop}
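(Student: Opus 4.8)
The plan is to run the B\"okstedt spectral sequence and to identify its $E^2$-term via the Hochschild--Kostant--Rosenberg (HKR) theorem. By Corollary~\ref{cor:cofnuca} we may work with $\thh(E(n)_\Q)\simeq\thh(E(n))_\Q$, which is again a rational spectrum, so that its rational homology agrees with its homotopy. Since $E(n)_\Q$ is a commutative rational ring spectrum with $H\Q_*E(n)_\Q\cong\pi_*E(n)_\Q=R$, where we abbreviate $R=\Q[v_1,\dots,v_{n-1},v_n^{\pm1}]$, the (multiplicative) B\"okstedt spectral sequence takes the form
$$E^2_{*,*}=\HH_{*,*}^\Q(R)\Longrightarrow\pi_*\thh(E(n))_\Q\,.$$

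First I would compute the $E^2$-term. The graded $\Q$-algebra $R$ is a localization of a polynomial algebra and hence smooth over $\Q$, and its module of K\"ahler differentials $\Omega^1_{R/\Q}$ is free on the classes $dv_1,\dots,dv_n$. The HKR theorem therefore identifies
$$\HH_{*,*}^\Q(R)\cong\Lambda_R(\Omega^1_{R/\Q})\cong R\otimes_\Q\Lambda_\Q(dv_1,\dots,dv_n)\,,$$
with each $dv_i$ in Hochschild homological degree $1$ and internal degree $|v_i|=2p^i-2$, so that its total degree is $2p^i-1$ as claimed.

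Next I would argue that the spectral sequence collapses at $E^2$. The exterior generators $dv_i$ live in Hochschild filtration $1$ and are the images, under the suspension map $\sigma\colon\Sigma E(n)_\Q\to\thh(E(n))_\Q$ of~\eqref{sigma}, of the classes $v_i\in\pi_*E(n)_\Q$; being in the image of a map of spectra, they are permanent cycles. As the $E^2$-term is generated as an $R$-algebra by these classes together with $R$ itself in filtration $0$, all algebra generators survive, and by multiplicativity of the spectral sequence there is no room for any differential.

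Finally I would resolve the extension problems. The generators $dv_i$ all sit in odd total degree, so they square to zero by graded commutativity and no multiplicative extensions can occur; and since the associated graded is a free $R$-module, the module extensions split as well. This yields the stated isomorphism~\eqref{eq:thhenq}. I expect the only genuinely delicate point to be the identification of the Hochschild-degree-$1$ classes with the image of $\sigma$, which is what forces the collapse; once this is in hand, everything else is formal because we work over $\Q$.
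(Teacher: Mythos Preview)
Your proof is correct and follows essentially the same approach as the paper: run the B\"okstedt spectral sequence, identify the $E^2$-term via smoothness/HKR, and use multiplicativity to obtain collapse with no extensions. The only difference is that you invoke the map $\sigma$ to force the $dv_i$ to be permanent cycles, whereas the paper simply observes that these algebra generators sit in homological degree~$1$ and hence cannot support any $d_r$ for $r\geq 2$ for degree reasons --- so the step you flag as ``genuinely delicate'' is in fact unnecessary.
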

\begin{proof}
The B\"okstedt spectral sequence for $\pi_*(\thh(E(n))_\Q) \cong
H\Q_*\thh(E(n))$ is of the form
$$ E^2_{*,*} = \HH_{*,*}^\Q(\pi_*E(n)_\Q) \Rightarrow
\pi_*(\thh(E(n))_\Q).$$
As $\Q[v_1, \ldots, 
v_{n-1}, v_n^{\pm 1}]$ is \'etale over  $\Q[v_1, \ldots, 
v_{n-1}, v_n]$ and as $\Q[v_1, \ldots, 
v_{n-1}, v_n]$ is smooth, we get 
$$ \HH_{*,*}^\Q(\pi_*E(n)_\Q) \cong \Q[v_1, \ldots, 
v_{n-1}, v_n^{\pm 1}] \otimes \Lambda_\Q(dv_1, \ldots, dv_n)$$
with $dv_i$ having homological degree one and internal degree
$2p^i-2$. As the B\"okstedt spectral sequence is multiplicative and as
the algebra generator cannot support any differentials for degree reasons, the spectral
sequence collapses at $E^2$. There are no multiplicative extensions
and hence we get the result. 
\end{proof}
\begin{rem}
As we work rationally,  $\thh(E(n))_\Q$ is a commutative $H\Q$-algebra
spectrum and hence corresponds to a commutative differential graded
$\Q$-algebra (see \cite{shipley} or \cite{rs}). 
\end{rem}

\section{$K(i)_*E(n)$ and $K(i)_*\thh(E(n))$}
In the following we assume that $p$ is an odd prime, and that $n$ and $i$ are integers with
$1\leq i\leq n$. 
The Hopf algebroid $(BP_*, BP_*BP)$ represents the groupoid of strict 
isomorphisms of $p$-typical formal group laws \cite{landweber} (see
also \cite[Theorem A2.1.27]{ravenelcob}). 
There are isomorphisms of graded $\Z_{(p)}$-algebras
$$
BP_*\cong \Z_{(p)}[v_1,v_2,\dots]\ \ \textup{and}\ \ BP_*BP\cong BP_*[t_1,t_2,\dots]\,,
$$
where $|v_i|=|t_i|=2(p^i-1)$. By convention $v_0=p$ and $t_0=1$.
The $i$th Morava $K$-theory $K(i)$ is complex oriented, and
its formal group law $F_i$ (the Honda formal group law) corresponds to the map
$BP_*\to K(i)_*=\F_p[v_i^\pm]$ sending $v_i$ to $v_i$ and $v_k$ for $k\neq i$ to zero.
The $p$-typical formal group law $G_n$ over $E(n)_*$ comes
from the map $BP_* \ra E(n)_*$ that kills all $v_i$ with $i > n$ and inverts 
$v_n$. 
Since $E(n)$ is a Landweber exact homology theory, we obtain an isomorphism
\begin{equation} \label{eq:kien}
K(i)_*E(n) \cong K(i)_* \otimes_{BP_*} BP_*BP \otimes_{BP_*} E(n)_*.
\end{equation}
Note that $K(i)_*E(n)$ is trivial for $i > n$ and that the Bousfield class of
$E(n)$, $\langle E(n)\rangle$, is $\langle K(0) \vee \ldots \vee K(n)\rangle$. 

We first treat the case $i=n$.

\begin{prop} \label{prop:knlocal}
For all $n \geq 1$ the canonical map $E(n) \ra \thh(E(n))$ is a
$K(n)$-local equivalence. 
\end{prop}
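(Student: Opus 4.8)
The plan is to prove the equivalent statement that the cofiber $\bthh(E(n))$ of the unit is $K(n)$-acyclic, i.e.\ that the unit induces an isomorphism $K(n)_*E(n)\to K(n)_*\thh(E(n))$. The tool is the B\"okstedt spectral sequence
$$
E^2_{*,*}=\HH_{*,*}^{K(n)_*}\bigl(K(n)_*E(n)\bigr)\Rightarrow K(n)_*\thh(E(n)),
$$
whose edge homomorphism along the filtration-zero line is the map induced by $E(n)\to\thh(E(n))$. Since $p$ is odd, $K(n)$ is homotopy commutative and $K(n)_*E(n)$ is a commutative $K(n)_*$-algebra, so the $E^2$-term is genuine Hochschild homology. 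It therefore suffices to show that $K(n)_*E(n)$ is formally \'etale over $K(n)_*=\F_p[v_n^{\pm1}]$: then $\HH_s^{K(n)_*}(K(n)_*E(n))$ vanishes for $s>0$ and equals $K(n)_*E(n)$ for $s=0$, the spectral sequence collapses onto its zero line, and the edge map is the required isomorphism. Note that no commutativity of $E(n)$ beyond what is needed to form $\thh$ and run the spectral sequence enters here.

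First I would make $K(n)_*E(n)$ explicit from the Landweber presentation \eqref{eq:kien}. Writing $BP_*BP=BP_*[t_1,t_2,\dots]$ and using the left unit to identify $K(n)_*\otimes_{BP_*}BP_*BP\cong K(n)_*[t_1,t_2,\dots]$, the remaining tensor with $E(n)_*$ over $BP_*$ imposes, through the right unit $\eta_R$, the relations $\eta_R(v_j)=0$ for $j>n$ and identifies the image of each $v_i\in E(n)_*$ with the reduction of $\eta_R(v_i)$. The decisive simplification is the invariance congruence $\eta_R(v_i)\equiv v_i \pmod{I_i}$ with $I_i=(p,v_1,\dots,v_{i-1})$: reducing modulo $I_n=(p,v_1,\dots,v_{n-1})$ kills the images of all $v_i$ with $i<n$ and leaves $\eta_R(v_n)\equiv v_n$. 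Hence $K(n)_*E(n)$ is generated over $K(n)_*$ by the classes $t_1,t_2,\dots$ alone, subject only to the relations $\eta_R(v_{n+k})=0$ for $k\geq1$, and it carries an augmentation $K(n)_*E(n)\to K(n)_*$ sending every $t_k$ to $0$.

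The heart of the argument is the \'etaleness. Reducing the right-unit formula modulo $I_n$ and matching coefficients in the identity $\eta_R(v_n)\,g(x)^{p^n}=g\bigl(\eta_R(v_n)x^{p^n}\bigr)$ forced by the $p$-series $[p](x)=v_nx^{p^n}$ of the Honda formal group law, each relation acquires leading part $\eta_R(v_{n+k})\equiv v_nt_k^{p^n}-v_n^{p^k}t_k$ modulo terms decomposable in the $t_j$; this is precisely the structure of the Morava stabilizer algebra. I would then invoke the Jacobian criterion: the matrix $\bigl(\partial\,\eta_R(v_{n+k})/\partial t_j\bigr)_{k,j}$ reduces, modulo the augmentation ideal, to the diagonal matrix with entries $-v_n^{p^k}$ (the $t_k^{p^n}$-term contributes nothing in characteristic $p$, and all off-diagonal derivatives lie in the augmentation ideal). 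These diagonal entries are units, so the Jacobian is invertible, $\Omega^1_{K(n)_*E(n)/K(n)_*}=0$, and $K(n)_*E(n)$ is an ind-\'etale, hence formally \'etale, $K(n)_*$-algebra. This supplies the vanishing of $\HH^{K(n)_*}_s$ for $s>0$ demanded in the first paragraph, whence the spectral sequence collapses, the edge map $K(n)_*E(n)\xrightarrow{\ \cong\ }K(n)_*\thh(E(n))$ is an isomorphism, and $E(n)\to\thh(E(n))$ is a $K(n)$-local equivalence.

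The main obstacle I anticipate is exactly this \'etaleness step: one must control the right-unit formula modulo $I_n$ precisely enough to isolate the separable leading term $v_nt_k^{p^n}-v_n^{p^k}t_k$ and to confirm that the infinite Jacobian is genuinely invertible, so that the presentation is \'etale of relative dimension zero rather than merely smooth. The degrees force each relation $\eta_R(v_{n+k})$ to involve only finitely many $t_j$ and make the matrix triangular up to unit diagonal in each total degree, which should make the determinant argument rigorous, but organizing this for the non-Noetherian, infinitely generated algebra $K(n)_*E(n)$ is the technical crux. Once formal \'etaleness is secured, the collapse of the B\"okstedt spectral sequence and the identification of the edge homomorphism with the unit-induced map are formal.
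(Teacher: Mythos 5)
Your overall strategy coincides with the paper's own proof: exhibit $K(n)_*E(n)$ as a filtered colimit of \'etale $K(n)_*$-algebras, conclude that the $E^2$-term $\HH_*^{K(n)_*}(K(n)_*E(n))$ of the $K(n)$-based B\"okstedt spectral sequence is concentrated in homological degree zero, and identify the resulting isomorphism with the map induced by the unit $E(n)\ra\thh(E(n))$. The difference is how \'etaleness is obtained, and there your argument has a genuine gap. The paper quotes \cite[6.1.16]{ravenelcob}: $K(n)_*E(n)\cong\Sigma(n)=K(n)_*[t_1,t_2,\dots]/(v_nt_i^{p^n}-v_n^{p^i}t_i,\ i\geq1)$, where each relation is a separable polynomial in the \emph{single} variable $t_i$ (its $t_i$-derivative is the unit $-v_n^{p^i}$), so every finite stage is free and \'etale over $K(n)_*$ by inspection. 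You instead re-derive the presentation, but you only control the relations up to ``terms decomposable in the $t_j$'', and your key inference --- the Jacobian is congruent to an invertible diagonal matrix modulo the augmentation ideal, \emph{hence} is invertible --- is not valid. The augmentation ideal $(t_1,t_2,\dots)$ is not contained in the Jacobson radical of $K(n)_*E(n)$, so a unit plus an augmentation-ideal element need not be a unit: already in $\Sigma(n)$ the element $1+t_1$ is not invertible, since mapping to $K(n)_*[t_1]/(t_1^{p^n}-v_n^{p-1}t_1)$ by killing $t_2,t_3,\dots$ sends it to an element of norm $1-v_n^{p-1}$, which is not a unit in $\F_p[v_n^{\pm1}]$. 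Thus invertibility of the Jacobian modulo the augmentation ideal only gives \'etaleness near the augmentation, whereas the Weibel--Geller/HKR vanishing of $\HH_{>0}$ that you need requires \'etaleness of the whole algebra. Your fallback claim that the matrix is ``triangular up to unit diagonal in each total degree'' is also false: by degree count and the right-unit formula, $\eta_R(v_{n+k})$ involves $t_j$ for all $j\leq n+k$, not only for $j\leq k$.

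What is actually needed is an equality of ideals, not a congruence of leading terms: inside $K(n)_*\otimes_{BP_*}BP_*BP\cong K(n)_*[t_1,t_2,\dots]$ one must show $(\eta_R(v_{n+k}):k\geq1)=(v_nt_k^{p^n}-v_n^{p^k}t_k:k\geq1)$. This is precisely the content of Ravenel's theorem that the paper cites, and it is proved by induction on $k$: modulo $I_n$ and the earlier relations $\eta_R(v_{n+1}),\dots,\eta_R(v_{n+k-1})$, the right-unit formula collapses $\eta_R(v_{n+k})$ to $v_nt_k^{p^n}-v_n^{p^k}t_k$ on the nose, the correction terms being multiples of ($p$-th powers of) the earlier relations. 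With the clean presentation, flatness (the relations are monic in $t_k$ after inverting $v_n$) and the vanishing of $\Omega^1$ are immediate, and the rest of your argument --- collapse of the spectral sequence and identification of the edge homomorphism --- goes through as written. Note also that the paper's Proposition~\ref{prop:colimetale}, which handles the genuinely harder cases $i<n$ where such correction terms cannot be avoided, never argues via an invertible Jacobian: it shows instead that every occurrence of the $t_j$'s in the relations, apart from the leading linear term, is a $p$-th power or lies in the base ring, so that its K\"ahler differential vanishes identically.
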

\begin{proof}
The algebra $K(n)_*E(n)$ is known as $\Sigma(n)$ and it is of the
form  
$$ K(n)_*[t_1,t_2,\ldots]/(v_nt_i^{p^n}-v_n^{p^i}t_i, i \geq 1), $$ 
see \cite[6.1.16]{ravenelcob}. If we set 
$$C_*^{(k)} := K(n)_*[t_1, \ldots, t_k]/(v_nt_i^{p^n}-v_n^{p^i}t_i, 1
\leq i \leq k)$$ 
then $C_*^{(k)}$ is \'etale over $K(n)_*$ and $K(n)_*E(n)$ is the 
directed colimit of the $C_*^{(k)} $'s. 

The $K(n)_*$-B\"okstedt spectral sequence for $\thh(E(n))$ has as an 
$E^2$-term 
$$
\HH_*^{K(n)_*}(K(n)_*E(n)) \cong K(n)_*E(n)
$$ 
concentrated
in homological degree zero. Thus $K(n)_*\thh(E(n)) \cong K(n)_*E(n)$
and the isomorphism is induced by the map $E(n) \ra
\thh(E(n))$. Therefore, this map is a $K(n)$-equivalence and thus
$K(n)$-locally $\thh(E(n))$ is equivalent to $E(n)$. 
\end{proof}
We calculate $K(i)_*E(n)$ for $1 \leq i \leq n-1$ using the following
description of morphisms of graded commutative $BP_*$-algebras from $K(i)_*E(n)$
to some graded commutative ring $B_*$. For $n=2$ we had an argument
that was rather involved and Paul Goerss suggested the following
simpler proof. 

We consider the map $g\colon  BP_*BP \ra K(i)_*E(n)$ of graded commutative 
$\Z_{(p)}$-algebras given by 
$$ BP_*BP \ra K(i)_* \otimes_{BP_*} BP_*BP \otimes_{BP_*} E(n)_* \cong 
K(i)_*E(n)$$ 
which uses the canonical maps $BP_* \ra K(i)_*$ and $BP_* \ra E(n)_*$ and 
the isomorphism from \eqref{eq:kien}.  
By \cite[Theorem A2.1.27]{ravenelcob} this  map corresponds to a 
triple $((\eta_L)_*F_i, (\eta_R)_*G_n, f)$ where $\eta_L \colon  K(i)_* 
\ra K(i)_*E(n)$ is the left unit, $\eta_R \colon  E(n)_* \ra K(i)_*E(n)$ 
is the right unit and 
$(\eta_L)_*F_i$ and $(\eta_R)_*G_n$ are the $p$-typical formal group laws 
that are given by the corresponding change of coefficients. Here, $f$ is a 
strict isomorphism between the $p$-typical formal group laws 
$(\eta_L)_*F_i$ and $(\eta_R)_*G_n$ over $K(i)_*E(n)$. By 
\cite[Lemma A2.1.26]{ravenelcob} such a strict isomorphism is always of 
the form 
$$ f(x) = \sum_j{}^{(\eta_R)_*G_n} t_jx^{p^j}.$$

The $p$-series of the Honda formal group law $F_i$ is 
$$ [p]_{F_i}(x) = v_ix^{p^i}$$ 
and the same is true for $[p]_{(\eta_L)_*F_i}[x]$ because the left unit 
just embeds $K(i)_*$ into $K(i)_*E(n)$. The $p$-series of $(\eta_R)_*G_n$ is 
$$ [p]_{(\eta_R)_*G_n}(x) = w_1x^p +_{(\eta_R)_*G_n} 
\ldots +_{(\eta_R)_*G_n} w_nx^{p^n} $$
for $w_i = \eta_R(v_i)$.  
 
First, we state an elementary lemma about powers of $p$. 
\begin{lem} \label{lem:ppower}
Let $m\geq 2$, let $r, \ell_1, \ldots, \ell_m$ be natural numbers bigger or equal to
$1$, and assume that $\ell_j \neq \ell_k$ for $j \neq k$. Then 
$p^r$ cannot be written as a sum $p^{\ell_1} + \ldots + p^{\ell_m}$. 
\end{lem}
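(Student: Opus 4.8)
The plan is to argue via $p$-adic valuations, exploiting that distinctness of the exponents forces a unique minimal term. First I would relabel the summands so that $\ell_1 < \ell_2 < \cdots < \ell_m$, and set $d := \ell_1$ for the smallest exponent.

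The key step is then to factor out the smallest power of $p$, writing
$$ p^{\ell_1} + \cdots + p^{\ell_m} = p^{d}\bigl(1 + p^{\ell_2 - d} + \cdots + p^{\ell_m - d}\bigr). $$
Since $\ell_k > d$ for every $k \geq 2$, each exponent $\ell_k - d$ is at least $1$, so the parenthesized cofactor is $\equiv 1 \pmod p$ and in particular is coprime to $p$. Hence the $p$-adic valuation of the whole sum equals exactly $d$. Supposing for contradiction that the sum equals $p^r$, comparison of $p$-adic valuations forces $r = d$, so that $p^r = p^{\ell_1}$. But because $m \geq 2$ there is at least one further positive summand $p^{\ell_2}$, whence the sum strictly exceeds $p^{\ell_1} = p^r$, which is absurd. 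This contradiction establishes the claim.

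I expect no genuine obstacle: the statement is elementary and the valuation argument above is essentially immediate. The only points where the hypotheses are actually used are that distinctness of the $\ell_k$ guarantees a single smallest term (so the cofactor is a unit modulo $p$), and that $m \geq 2$ makes the sum strictly larger than its leading power $p^{\ell_1}$. Primality of $p$ in fact plays no role here—any integer base $\geq 2$ would do—but since the lemma is only ever applied to our fixed odd prime there is no reason to phrase it more generally.
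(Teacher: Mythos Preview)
Your argument is correct and follows essentially the same route as the paper: both factor out the minimal power $p^{\ell_1}$ and observe that the remaining cofactor is congruent to $1$ modulo $p$, which is incompatible with it being a nontrivial power of $p$. Your endgame via $p$-adic valuation and a size comparison is arguably a bit crisper than the paper's phrasing, but the substance is identical.
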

\begin{proof}
Assume 
$$ p^r = p^{\ell_1} + \ldots + p^{\ell_m}.$$
Without loss of generality let $\ell_1$ be minimal among the $\ell_j$'s. Then 
$$ p^r = p^{\ell_1}(1+ p^{\ell_2-\ell_1} + \ldots + p^{\ell_m-\ell_1}).$$
This is only possible if all the $\ell_j - \ell_1$ are equal to zero
and if $m = p^{r-\ell_1}$. But $\ell_j - \ell_1 = 0$ for all $2 \leq j
\leq m$ implies that all the $\ell_j$'s are equal to $\ell_1$ and this
contradicts our assumption. 
\end{proof}

\begin{prop} \label{prop:colimetale}
For all $1 \leq i \leq n$ $K(i)_*E(n)$ is a colimit of \'etale
$K(i)_*[w_{i+1},\ldots, w_n^{\pm 1}]$-algebras.   
\end{prop}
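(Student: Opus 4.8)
The plan is to read an explicit presentation of $K(i)_*E(n)$ off the strict isomorphism $f$, and then display the ring as an increasing union of \'etale extensions of $B:=K(i)_*[w_{i+1},\ldots,w_n^{\pm 1}]$. By \eqref{eq:kien} and base change along $\eta_L$ we have $K(i)_*E(n)\cong K(i)_*[t_1,t_2,\ldots]\otimes_{BP_*}E(n)_*$, so $K(i)_*E(n)$ is generated over $B$ by the $t_j$ subject to the relations obtained from equating the coefficients of $x^{p^m}$ in
$$f([p]_{F_i}(x))=\sum_{j}{}^{(\eta_R)_*G_n}t_jv_i^{p^j}x^{p^{i+j}}\qquad\text{and}\qquad [p]_{(\eta_R)_*G_n}(f(x))=\sum_{k=1}^{n}{}^{(\eta_R)_*G_n}w_kf(x)^{p^k}.$$
First I would treat the coefficients for $m\leq i$: their leading parts force $w_1=\dots=w_{i-1}=0$ and $w_i=v_i$, which explains why the free generators left over are exactly $w_{i+1},\dots,w_n$ and lets these low relations be absorbed into the base $B$.

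For $m>i$ the coefficient of $x^{p^m}$ produces a relation $R_m$ whose newest variable is $t_{m-i}$; it enters linearly on the left with coefficient $v_i^{p^{m-i}}$, so solving for it I expect
$$R_m:\qquad v_i^{p^{m-i}}t_{m-i}=v_it_{m-i}^{p^i}+\sum_{i<k\leq\min(m,n)}w_kt_{m-k}^{p^k}+(\text{correction terms}).$$
I would then set $C_*^{(r)}:=B[t_1,\dots,t_r]/(R_{i+1},\dots,R_{i+r})$ and prove that each $C_*^{(r)}$ is \'etale over $B$ and that $K(i)_*E(n)=\colim_rC_*^{(r)}$. The colimit statement is formal: every $t_j$ and every $R_m$ appears at a finite stage, and the transition maps are the evident inclusions.

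The substance is the \'etaleness, which I would establish by showing that the Jacobian $\bigl(\partial R_{i+s}/\partial t_{s'}\bigr)_{1\leq s,s'\leq r}$ is lower triangular with unit diagonal $v_i^{p^{s}}$. Two characteristic $p$ features are responsible. On the right, $[p]_{(\eta_R)_*G_n}(f(x))$ is a formal sum of the powers $f(x)^{p^k}$ with $k\geq1$, and the Frobenius shows that every monomial occurring in $f(x)^{p^k}$ has all $t$-exponents divisible by $p$; since the coefficients of $(\eta_R)_*G_n$ and the $w_k$ lie in $B$ and carry no $t$, the entire right-hand side has vanishing partial derivative in each $t_s$. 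On the left, a genuine cross term contributing to $x^{p^m}$ is a product of at least two of the summands $t_jv_i^{p^j}x^{p^{i+j}}$, and then its top $t$-index is strictly smaller than $m-i$; hence $t_{m-i}$ survives only in the linear term and the cross terms feed strictly below the diagonal. Here Lemma~\ref{lem:ppower} is exactly what excludes the ``all exponents equal to one'' cross terms, since a sum of at least two distinct powers of $p$ is never a power of $p$, while the vanishing of pure powers of a single summand is the unit axiom of the formal group law.

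The step I expect to be the main obstacle is precisely this bookkeeping of the formal-group corrections: one must check that, after expanding both $+_{(\eta_R)_*G_n}$ and the Frobenius powers, no term other than $v_i^{p^{m-i}}t_{m-i}$ contributes to $\partial R_m/\partial t_{m-i}$. Granting this, the Jacobian has unit determinant $\prod_s v_i^{p^{s}}$, each $C_*^{(r)}$ is \'etale over $B$ by the Jacobian criterion, and passing to the colimit yields the proposition. A reassuring check is the case $i=n$, where $B$ collapses to $K(n)_*$ and $R_m$ becomes $v_nt_{m-n}^{p^n}=v_n^{p^{m-n}}t_{m-n}$, recovering the presentation of $\Sigma(n)$ used in the proof of Proposition~\ref{prop:knlocal}.
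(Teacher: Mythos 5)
Your strategy is in substance the paper's own: extract relations among the $t_j$'s and $w_k$'s from the identity $f([p]_{(\eta_L)_*F_i}(x))=[p]_{(\eta_R)_*G_n}(f(x))$, deduce $w_1=\dots=w_{i-1}=0$ and $w_i=v_i$, adjoin the $t_j$'s one at a time, and get \'etaleness from a characteristic-$p$ derivative computation. Your triangular-Jacobian argument is the paper's K\"ahler-differential computation in matrix form, resting on exactly the same two observations: every $t$-exponent on the $[p]_{(\eta_R)_*G_n}(f(x))$ side is divisible by $p$, and on the other side the top variable $t_{m-i}$ enters only linearly, with unit coefficient $v_i^{p^{m-i}}$. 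One small correction of attribution: your degree bound (a cross term has at least two factors, each of $x$-degree strictly less than $p^m$, so its top $t$-index is strictly less than $m-i$) is the complete argument for this last point; Lemma~\ref{lem:ppower} as stated only excludes cross terms in which distinct summands each occur with multiplicity one, so it cannot carry the whole burden you (and, to be fair, the paper) place on it.

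The genuine gap is the sentence ``the colimit statement is formal.'' In your setup it is not. Because you define $C^{(r)}_*=B[t_1,\dots,t_r]/(R_{i+1},\dots,R_{i+r})$ by an explicit presentation, the formal part is only that $\colim_r C^{(r)}_*\to K(i)_*E(n)$ is surjective; injectivity is precisely the claim that the coefficient relations $R_m$ generate the \emph{entire} ideal of relations holding among the $t_j$'s and $w_k$'s in $K(i)_*E(n)$, and without it your Jacobian criterion proves \'etaleness of the presented rings $C^{(r)}_*$ but says nothing about $K(i)_*E(n)$ itself. (Your own sanity check illustrates the issue: that $\Sigma(n)$ is presented by exactly the relations $v_nt_j^{p^n}-v_n^{p^j}t_j$ is a theorem of Ravenel, not a formality.) There are two ways to close this. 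Either argue from Landweber exactness \eqref{eq:kien} that $K(i)_*E(n)\cong K(i)_*[t_1,t_2,\dots][w_n^{-1}]/(\eta_R(v_j)\,|\,j>n)$ and then check inductively, eliminating the variables $w_{i+1},\dots,w_n$ via $R_{i+1},\dots,R_n$ (each contains its new $w$ linearly with coefficient $1$), that the remaining $R_m$ with $m>n$ generate the same ideal as the $\eta_R(v_j)$. Or do what the paper does: define the $r$th stage as the image $B_r$ of $B[t_1,\dots,t_r]$ in $K(i)_*E(n)$, which makes the colimit statement genuinely formal, and note that your derivative computation still forces $\Omega^1_{B_r|B}=0$, since extra relations can only make this module smaller; what then remains, and what the paper settles by observing that the new generator satisfies a relation of the monic form $v_ix^{p^i}-v_i^{p^m}x-y$ with $y$ in the previous stage, is flatness.
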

\begin{proof}
In the following we fix $i$ and $n$. 
We denote by $B(i,n)_*$ the graded commutative 
$K(i)_*$-algebra $K(i)_*[w_{i+1},\ldots, w_{n-1}, w_n^{\pm 1}]$. 
For a given $m \geq 1$ consider the graded commutative $BP_*$-subalgebra 
$BP_*[t_1, \ldots, t_m]$ of $BP_*BP$ and define 
$$
B_m =\mathrm{Image}\big(B(i,n)_*[t_1, \ldots, t_m] \ra K(i)_*E(n)\big)\,.
$$
Thus we can express $B_m$ as $B(i,n)_*[t_1,\ldots,t_m]/\sim$ where 
$\sim$ denotes the quotient that arises from the relations that the
$t_r$'s and $w_j$'s satisfy in $K(i)_*E(n)$. 
Note that $B_{m+1}$ is free as a $B_m$-module for all $m \geq 1$. Indeed, in each
step we adjoin a new polynomial generator $x$ to a graded commutative ring
$R_*$ that satisfies relations of the form $x^{p^r} - ux -y$ with a
unit $u \in R_*^\times$ and $y \in R_*$. 

The strict isomorphism $f(x) = \sum_j{}^{(\eta_R)_*G_n} t_jx^{p^j}$ satisfies 
$$ [p]_{(\eta_R)_*G_n}(f(x)) = f([p]_{(\eta_L)_*F_i}(x))$$
and this yields the equality 
\begin{equation} \label{eq:gnsums}
w_1(f(x))^p +_{(\eta_R)_*G_n} \ldots +_{(\eta_R)_*G_n}  
w_n(f(x))^{p^n} =
f(v_ix^{p^i}) = \sum_j{}^{(\eta_R)_*G_n} t_j(v_ix^{p^i})^{p^j}.
\end{equation}

On the right hand side in 
$\sum_j{}^{(\eta_R)_*G_n} t_j v_i^{p^j} x^{p^{i+j}}$  
the relations for the $t_r$ are detected by the powers 
$x^{p^{i+r}}$. Lemma \ref{lem:ppower} ensures
that for a given  $x^{p^{i+r}}$ we only have to consider the coefficient $t_j
v_i^{p^j}$ with $i+j=i+r$ coming from the linear term of the 
$(\eta_R)_*G_n$-sum 
$\sum_j{}^{(\eta_R)_*G_n} t_j v_i^{p^j} x^{p^{i+j}}$ and this is 
$t_{r}v_i^{p^{r}}$. 

As
the right hand side starts with $x^{p^i}$, it is a direct consequence
that $w_1, \ldots, w_{i-1}=0$ and from the coefficients of $x^{p^i}$
we obtain that $w_i = v_i$ in $K(i)_*E(n)$. 

We prove that $B_1$ is \'etale over $B(i,n)_*$ and that for every $m$, $B_{m}$ is 
\'etale over $B_{m-1}$. It follows that the algebras
$B_m$ are \'etale over $B(i,n)_*$.

Thus we have to show that the modules of relative K\"ahler differentials 
$\Omega^1_{B_{1}|B(i,n)_*}$ and $\Omega^1_{B_{m}|B_{m-1}}$ 
are trivial for all $m \geq 2$.

For $m=1$ we compare the coefficients of $x^{p^{i+1}}$ in
\eqref{eq:gnsums}. In this case only the linear terms of the
$(\eta_R)_*G_n$-sums contribute something and we obtain 
$$ v_it_1^{p^i} + w_{i+1}t_0 = t_1v_i^p$$
and therefore $t_1 = v_i^{-p}(v_it_1^{p^i} + w_{i+1})$. This gives a flat 
extension and the K\"ahler differential on $t_1$ is equal to 
$$ dt_1 = 0 + v_i^{-p}dw_{i+1}$$ 
and hence $B_1$ is \'etale over $B(i,n)_*$. 

Consider $B_m$. Then the first relation 
for $t_{m}$ is given by the relation of the coefficients for
$x^{p^{i+m}}$. 

We know that the formal group law $G_n(x,y)$ is of the form 
$$ G_n(x,y) = x + y + \sum_{i,j \geq 1} a_{i,j}x^iy^j$$
where the $a_{i,j} \in E(n)_* = \Z_{(p)}[v_1,\ldots,v_{n-1}, v_n^{\pm
  1}]$.  Equation~\eqref{eq:gnsums} relates power series with coefficients in
$K(i)_*E(n)$, hence the coefficients $\bar{a}_{i,j}$ of $(\eta_R)_*G_n$ are 
now considered in $K(i)_*E(n)$ and are elements of
$\F_p[w_i, \ldots, w_{n-1}, w_n^{\pm 1}]$. On the left hand side of
\eqref{eq:gnsums}  we get coefficients that involve some polynomials 
of $\bar{a}_{i,j}$'s, some $p$th powers of $t_j$'s and some expressions
in $w_k$'s. For $m+i \leq n$ we actually get a coefficient
$w_{m+i}t_0^{p^{m+i+0}} = w_{i+m}$.  

The $\bar{a}_{i,j}$'s are in $B(i,n)_*$, so they don't
contribute anything to the relative K\"ahler differentials. The
K\"ahler differentials on the $t_j^{p^k}$ are trivial because we are
over $\F_p$.  Hence we
can express the K\"ahler differential $dt_m$ up to a factor of $v_i^{p^m} =
w_i^{p^m}$ via K\"ahler differentials in the $w_k$'s. As $v_i^{p^m}$
is invertible in $B(i,n)_*$, the relative K\"ahler differentials  
$\Omega^1_{B_m|B_{m-1}}$ are  trivial for all $m \geq 1$. 
\end{proof}

\begin{thm}\label{thm:hh}
For all $1 \leq i \leq n$ we have an isomorphism of $K(i)_*E(n)$-algebras
$$ \HH_*^{K(i)_*}(K(i)_*E(n)) \cong K(i)_*E(n) \otimes_{\F_p}
\Lambda_{\F_p}(dw_{i+1},\ldots, dw_n).$$ 
\end{thm}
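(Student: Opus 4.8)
The plan is to leverage Proposition~\ref{prop:colimetale}, which exhibits $K(i)_*E(n)$ as a filtered colimit $\colim_m B_m$ of \'etale $B(i,n)_*$-algebras, where $B(i,n)_* = K(i)_*[w_{i+1}, \ldots, w_{n-1}, w_n^{\pm 1}]$. I would combine three standard facts: (i) the Hochschild--Kostant--Rosenberg theorem for smooth algebras, (ii) the vanishing of relative K\"ahler differentials for \'etale maps together with the transitivity sequence, and (iii) the fact that $\HH_*$ commutes with filtered colimits of algebras. Each ingredient reduces the computation to the module of K\"ahler differentials, which is already controlled by Proposition~\ref{prop:colimetale}.

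First I would compute $\HH_*^{K(i)_*}(B_m)$ for each $m$. Since $B(i,n)_*$ is a localization of a polynomial algebra over $K(i)_* = \F_p[v_i^{\pm 1}]$, it is smooth over $K(i)_*$, and each $B_m$, being \'etale over $B(i,n)_*$, is again smooth over $K(i)_*$. The HKR theorem then gives an isomorphism of graded-commutative $B_m$-algebras
$$ \HH_*^{K(i)_*}(B_m) \cong \Lambda_{B_m}^{*} \big(\Omega^1_{B_m | K(i)_*}\big). $$
To identify the differentials I would invoke the transitivity sequence for $K(i)_* \to B(i,n)_* \to B_m$: as $B_m$ is \'etale over $B(i,n)_*$ we have $\Omega^1_{B_m | B(i,n)_*} = 0$, whence $\Omega^1_{B_m | K(i)_*} \cong B_m \otimes_{B(i,n)_*} \Omega^1_{B(i,n)_* | K(i)_*}$ is free on the classes $dw_{i+1}, \ldots, dw_n$. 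Taking exterior algebras yields
$$ \HH_*^{K(i)_*}(B_m) \cong B_m \otimes_{\F_p} \Lambda_{\F_p}(dw_{i+1}, \ldots, dw_n), $$
an isomorphism of $B_m$-algebras, with each $dw_j$ in homological degree one and internal degree $2(p^j-1)$. Since we work at an odd prime and each $dw_j$ then sits in odd total degree, graded commutativity makes this a genuine exterior algebra.

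Finally I would pass to the colimit. Because the Hochschild complex is built degreewise out of tensor powers and both tensor products and homology commute with filtered colimits, we obtain
$$ \HH_*^{K(i)_*}(K(i)_*E(n)) \cong \colim_m \HH_*^{K(i)_*}(B_m) \cong \colim_m \big( B_m \otimes_{\F_p} \Lambda_{\F_p}(dw_{i+1}, \ldots, dw_n)\big), $$
and since $\colim_m B_m = K(i)_*E(n)$, the right-hand side is $K(i)_*E(n) \otimes_{\F_p} \Lambda_{\F_p}(dw_{i+1}, \ldots, dw_n)$, as claimed. As a sanity check, for $i=n$ the exterior algebra is trivial and we recover $\HH_* \cong K(n)_*E(n)$, consistent with Proposition~\ref{prop:knlocal}.

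I expect the main obstacle to be bookkeeping rather than conceptual: one must check that the HKR isomorphism is compatible with the transition maps $B_m \to B_{m+1}$, so that the colimit of the exterior algebras is taken along the identity on the generators $dw_j$ and the resulting identification is one of $K(i)_*E(n)$-algebras. A secondary point requiring care is that HKR is applied over the graded base ring $K(i)_*$ rather than over a field, so I would cite the version of the theorem valid for smooth algebras over an arbitrary commutative base and confirm that the freeness of $\Omega^1_{B(i,n)_* | K(i)_*}$ genuinely produces the exterior generators indexed by $i+1, \ldots, n$.
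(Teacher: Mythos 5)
Your proof is correct, and it shares the paper's skeleton---realize $K(i)_*E(n)$ as $\colim_m B_m$ via Proposition~\ref{prop:colimetale}, compute $\HH_*^{K(i)_*}(B_m)$ for each $m$, then use that Hochschild homology commutes with filtered colimits---but the middle step runs through a different key lemma. The paper computes $\HH_*^{K(i)_*}(B_m)$ by \'etale descent for Hochschild homology (Weibel--Geller~\cite{wg}), namely $\HH_*^{K(i)_*}(B_m)\cong B_m\otimes_{B(i,n)_*}\HH_*^{K(i)_*}(B(i,n)_*)$, so that HKR is only ever needed for the Laurent polynomial algebra $B(i,n)_*$, where it is classical. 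You instead apply HKR directly to $B_m$, which needs two extra inputs: that $B_m$ is smooth over $K(i)_*$ (\'etale over smooth is smooth, using that the $B_m$ are finitely presented over $B(i,n)_*$), and a version of HKR valid for arbitrary smooth algebras over a commutative base ring, not just over a field; you then identify $\Omega^1_{B_m|K(i)_*}$ via the Jacobi--Zariski sequence. One point you should make explicit there: the transitivity sequence is only right exact in general, so $\Omega^1_{B_m|B(i,n)_*}=0$ by itself yields only surjectivity of $B_m\otimes_{B(i,n)_*}\Omega^1_{B(i,n)_*|K(i)_*}\to\Omega^1_{B_m|K(i)_*}$; injectivity (indeed a splitting) uses formal smoothness of the \'etale map $B(i,n)_*\to B_m$, equivalently the standard base-change property of K\"ahler differentials along \'etale extensions. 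Granting that, both routes are sound: the paper's descent argument is insensitive to any smoothness statement about $B_m$ and keeps HKR in its easiest form, while yours avoids citing \'etale descent at the cost of a stronger HKR and the smoothness check. Your closing compatibilities---naturality of HKR in $m$, so the colimit is taken along the identity on the classes $dw_j$, and the observation that all rings involved are concentrated in even internal degrees, so ungraded commutative algebra applies---are exactly the right things to verify, and your $i=n$ sanity check agrees with Proposition~\ref{prop:knlocal}.
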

\begin{proof}
We have shown that $K(i)_*E(n)$ is the sequential colimit of the $B_m$'s. 
As the $K(i)_*$-algebras $B_m$ are \'etale
over $B(i,n)_*$ and as Hochschild homology commutes with
localization we can rewrite  $\HH_*(B_m)$ as 
\begin{align*}
\HH_*(B_m) \cong\ & B_m \otimes_{B(i,n)_*}
\HH_*^{K(i)_*}(B(i,n)_*)\\ 
\cong\ & B_m \otimes_{B(i,n)_*} (B(i,n)_* \otimes_{\F_p}
\Lambda_{\F_p}(dw_{i+1},\ldots, dw_n)) \\
\cong\ & B_m\otimes_{\F_p} \Lambda_{\F_p}(dw_{i+1},\ldots, dw_n))
\end{align*}
using \cite{wg} and the Hochschild-Kostant-Rosenberg theorem. 
Hochschild homology commutes with colimits, hence we obtain
$$ \HH_*^{K(i)_*}(K(i)_*E(n)) \cong \colim_m \HH^{K(i)_*}_*(B_m)\cong
K(i)_*E(n) \otimes_{\F_p} \Lambda_{\F_p}(dw_{i+1},\ldots, dw_n)\,.$$  
\end{proof}
\begin{thm} \label{thm:kithhen}
Assume that $p$ is an odd prime and that $E(n)$ is an $E_3$-ring
spectrum. Then, for all  $1 \leq i \leq n$, we have an isomorphism of
$K(i)_*E(n)$-algebras
$$ K(i)_*\thh(E(n)) \cong  K(i)_*E(n) \otimes_{\F_p}
\Lambda_{\F_p}(dw_{i+1},\ldots, dw_n).$$
\end{thm}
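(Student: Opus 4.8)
The plan is to run the $K(i)_*$-B\"okstedt spectral sequence for $\thh(E(n))$ and to show that it collapses with no room for differentials or multiplicative extensions, so that its $E^2$-page, already computed in Theorem~\ref{thm:hh}, is the answer. The role of the $E_3$-assumption is precisely to supply the multiplicative structure that makes this argument work: since forming $\thh$ lowers the degree of commutativity by one (an $E_{k+1}$-ring has $E_k$ topological Hochschild homology), an $E_3$-structure on $E(n)$ makes $\thh(E(n))$ an $E_2$-ring spectrum, hence homotopy commutative. Consequently $K(i)_*\thh(E(n))$ is a graded-commutative $K(i)_*$-algebra and the B\"okstedt spectral sequence
$$ E^2_{s,t} = \HH_s^{K(i)_*}(K(i)_*E(n))_t \Rightarrow K(i)_{s+t}\thh(E(n)) $$
is a spectral sequence of $K(i)_*$-algebras. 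Here $K(i)_*E(n)$ is free over the graded field $K(i)_*$, so the $E^2$-identification via K\"unneth is valid, and $K(i)$ is homotopy commutative because $p$ is odd.

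By Theorem~\ref{thm:hh} I may rewrite the $E^2$-page as
$$ E^2 \cong K(i)_*E(n) \otimes_{\F_p} \Lambda_{\F_p}(dw_{i+1},\ldots,dw_n), $$
where $K(i)_*E(n)$ sits in homological filtration zero and each exterior generator $dw_j$ has homological filtration one and internal degree $2p^j-2$, hence total degree $2p^j-1$. To obtain collapse at $E^2$, observe that the subalgebra $K(i)_*E(n)$ lies in filtration zero and therefore consists of permanent cycles. Each generator $dw_j$ lies in filtration one, and the B\"okstedt differential $d^r$ drops homological filtration by $r \geq 2$; thus $d^r(dw_j)$ lands in a negative filtration and vanishes. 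So every algebra generator of $E^2$ is a permanent cycle, and since the spectral sequence is multiplicative it follows that $d^r = 0$ for all $r \geq 2$ and $E^\infty = E^2$. As a consistency check, for $i = n$ the exterior factor is empty and we recover $K(n)_*\thh(E(n)) \cong K(n)_*E(n)$, in agreement with Proposition~\ref{prop:knlocal}.

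Finally I would promote the collapse to an isomorphism of algebras by ruling out multiplicative extensions. Each $dw_j$ has odd total degree $2p^j-1$, and since $p$ is odd, graded-commutativity of $K(i)_*\thh(E(n))$ forces $(dw_j)^2 = 0$. To realize the filtration-one classes in the abutment I would use the suspension map $\sigma$ of \eqref{sigma}: applying $K(i)_*$ to $\sigma\colon \Sigma E(n) \to \thh(E(n))$ produces classes $\sigma(w_j) \in K(i)_{2p^j-1}\thh(E(n))$ representing the generators $dw_j$ on $E^\infty$. These classes square to zero, so together with the unit map $E(n) \to \thh(E(n))$ on the filtration-zero part they assemble into a map of graded-commutative $K(i)_*E(n)$-algebras
$$ K(i)_*E(n) \otimes_{\F_p} \Lambda_{\F_p}(dw_{i+1},\ldots,dw_n) \lra K(i)_*\thh(E(n)). $$
By construction this map induces the identity on the associated graded $E^\infty$, so by exhaustiveness and completeness of the filtration it is an isomorphism.

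The main obstacle I anticipate is the multiplicativity bookkeeping around the $E_3$-hypothesis: one must verify that the $E_2$-structure on $\thh(E(n))$ genuinely endows the B\"okstedt spectral sequence with a multiplicative structure compatible with the shuffle product on $\HH_*^{K(i)_*}(K(i)_*E(n))$, since only then do the filtration-zero and filtration-one permanent cycles generate $E^2$ multiplicatively, and only then does the odd-degree vanishing $(dw_j)^2 = 0$ account for \emph{all} potential extensions. Once that multiplicative framework is in place, the degree and filtration counting above is routine.
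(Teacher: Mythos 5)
Your proposal follows the same overall strategy as the paper's proof: run the $K(i)_*$-based B\"okstedt spectral sequence, identify its $E^2$-page with the Hochschild homology computed in Theorem~\ref{thm:hh}, observe that all algebra generators lie in filtrations $0$ and $1$ so that the spectral sequence collapses, and then rule out multiplicative extensions. Where you genuinely diverge is in the treatment of extensions, and there your argument is sound and in fact more explicit than the paper's: the paper invokes \cite[Prop.~4.3]{ar05} to make the spectral sequence one of commutative $K(i)_*E(n)$-algebras and then disposes of extensions in one line by freeness of the abutment, whereas you produce explicit representatives $\sigma_*(w_j)$ of the classes $dw_j$ via the suspension map \eqref{sigma}, note that these odd-degree classes square to zero in the graded-commutative ring $K(i)_*\thh(E(n))$ (with commutativity deduced from the fact that $\thh$ of an $E_3$-ring spectrum is $E_2$, \cite{bfv}, \cite{bm}), and assemble a filtered algebra map out of the free object that is an isomorphism on associated graded. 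Since the filtration is finite in each total degree, this settles the $K(i)_*E(n)$-algebra structure; it is a legitimate alternative to the paper's terser argument, and it is the same mechanism (exterior algebras on odd generators admit no extensions) made concrete.

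The one real soft spot is where you locate the multiplicativity of the spectral sequence itself, which you flag as the ``main obstacle'' and describe as bookkeeping. The collapse argument genuinely needs the Leibniz rule, since differentials on classes of filtration $\geq 2$ are only controlled because those classes are products of filtration $\leq 1$ classes. But an $E_2$-structure on the \emph{spectrum} $\thh(E(n))$ does not supply this: multiplicativity of the B\"okstedt spectral sequence requires a multiplication compatible with the skeletal filtration of the simplicial spectrum computing $\thh(E(n))$, and such a filtered multiplication comes from operadic structure on $E(n)$ itself, not from a ring structure on the total spectrum after realization. This is precisely what \cite[Prop.~4.3]{ar05} provides --- already an $E_2$-structure on $E(n)$ makes the spectral sequence one of $K(i)_*$-algebras, as the paper notes in the remark following the theorem, and the $E_3$-structure upgrades it to one of commutative $K(i)_*E(n)$-algebras. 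Replacing your heuristic ``$\thh$ of $E_3$ is $E_2$, hence the spectral sequence is multiplicative'' by that citation closes the gap; everything else in your argument then goes through.
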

\begin{proof}
	We use the B\"okstedt spectral sequence \cite{Boe}, \cite[IX.2.9]{ekmm}, with $E^2$-term
$$ E^2_{r,s} = (\HH_r^{K(i)_*}(K(i)_*E(n)))_s\,,$$
where $r$ denotes the homological and $s$ the internal degree. 
By a result of Angeltveit and Rognes~\cite[Prop.~4.3]{ar05}, an $E_3$-structure
on $E(n)$ implies that this spectral is one of commutative
$K(i)_*E(n)$-algebras.
The
multiplicative generators $dw_j$ for $i \leq j \leq n$ sit in bidegree
$(1,2p^j-2)$ and hence they cannot carry any non-trivial
differentials. Therefore the spectral sequence collapses at the
$E^2$-term. As the abutment is a free graded commutative
$K(i)_*E(n)$-algebra, there cannot be any multiplicative
extensions. 
\end{proof}

\begin{rem}
	Note if $E(n)$ admits an $E_2$ structure, 
	the B{\"o}kstedt spectral sequence is one of
	$K(i)_*$-algebras by~\cite[Prop.~4.3]{ar05}. It therefore collapses
	since all $K(i)_*$-algebra generators lie in columns $0$ and $1$. This
	gives the same formula for
	$K(i)_*\thh(E(n))$ as a $K(i)_*$-module, but not as a
	$K(i)_*$-algebra, since there is now room for $K(i)_*$-algebra
	extensions.
\end{rem}

\section{Blue-shift for $\thh(E(n))$}
If we assume that $p$ is an odd prime and that $E(n)$ is an
$E_\infty$-ring spectrum, then $\thh(E(n))$ is a commutative
$E(n)$-algebra spectrum and the cofiber of the unit map 
$$ \bthh(E(n)) = \text{cofiber}\big(E(n) \ra \thh(E(n))\big)$$ 
is a non-unital commutative $E(n)$-algebra spectrum. 
If $E(n)$ carries an $E_3$-structure, then by \cite[\S 3.3]{bfv}, \cite{bm} the morphism
$E(n) \ra \thh(E(n))$ is an $E_2$-map. This implies the following
useful fact: 
\begin{lem} \label{lem:enlocal}
If $E(n)$ is an $E_3$-spectrum, then $\thh(E(n))$ is an $E(n)$-module
spectrum and in particular, $\thh(E(n))$ is $E(n)$-local. 
\end{lem}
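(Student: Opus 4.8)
The plan is to obtain the $E(n)$-module structure directly from the $E_2$-map recorded just above the lemma, and then to deduce $E(n)$-locality from the general principle that a module over a ring spectrum is local with respect to that ring spectrum. First I would note that, under the $E_3$-hypothesis, the unit $E(n) \to \thh(E(n))$ is an $E_2$-map; restricting this structure along the forgetful functor from $E_2$- to $E_1$-ring spectra, it is in particular a map of associative ring spectra. Such a map exhibits $\thh(E(n))$ as an $E(n)$-algebra, and hence as an $E(n)$-module via the composite $E(n) \wedge \thh(E(n)) \to \thh(E(n)) \wedge \thh(E(n)) \to \thh(E(n))$. This settles the first assertion, with no computation involved beyond unwinding which structure an $E_2$-map provides.

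For locality, write $R = E(n)$ and $M = \thh(E(n))$, with action $\mu\colon R \wedge M \to M$ and unit $\eta\colon S \to R$. The argument I have in mind is the standard factorization trick. Let $W$ be any $E(n)$-acyclic spectrum, so that $R \wedge W \simeq *$, and let $f\colon W \to M$ be arbitrary. Bifunctoriality of the smash product together with the unit axiom $\mu \circ (\eta \wedge \id_M) = \id_M$ identifies $f$ with the composite $W \xrightarrow{\eta \wedge \id_W} R \wedge W \xrightarrow{\id_R \wedge f} R \wedge M \xrightarrow{\mu} M$. Since this factors through $R \wedge W \simeq *$, the map $f$ is null. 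Applying this to all suspensions $\Sigma^k W$, which are again $E(n)$-acyclic, yields $\pi_k F(W, M) = 0$ for all $k$, hence $F(W, M) \simeq *$; thus $M$ receives no nonzero maps from $E(n)$-acyclics and is therefore $E(n)$-local.

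I do not expect a real obstacle here, since both steps are formal. The one point deserving a sentence of care is the first step: one should confirm that the $E_2$-map genuinely supplies $E(n)$-module data rather than a mere homotopy-ring map, and passing to the underlying $E_1$-map takes care of this. If one prefers, the locality step can instead be phrased by exhibiting $M$ as a retract of $R \wedge M$ (via $M \simeq S \wedge M \xrightarrow{\eta \wedge \id_M} R \wedge M \xrightarrow{\mu} M$) and invoking closure of the $E(n)$-local spectra under retracts, which reduces the claim to the evident locality of the free modules $R \wedge (-)$.
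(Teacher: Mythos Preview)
Your proposal is correct and follows essentially the same approach as the paper: the paper records (in the sentence immediately preceding the lemma) that the $E_3$-hypothesis makes the unit $E(n)\to\thh(E(n))$ an $E_2$-map, and then states the lemma without further proof, leaving implicit exactly the two formal steps you spell out. Your expansion of the locality argument via the retract/factorization trick is the standard justification the paper is tacitly invoking.
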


Let $L_n$ denote the localization at $E(n)$, and in particular $L_0$ is the
rationalization. Recall that there is a
well-known chromatic fracture square  
$$ \xymatrix{
{L_nX} \ar[r] \ar[d] & {L_{K(n)}X} \ar[d]\\
{L_{n-1}X} \ar[r] & {L_{n-1}L_{K(n)}X.}
}$$
It is shown for instance in \cite[Example 3.3]{acb} and
\cite[Proposition 2.2]{bauer} that the homotopy 
pullback of 
$$ \xymatrix{
 & {L_{K(n)}X} \ar[d]\\
{L_{n-1}X} \ar[r] & {L_{n-1}L_{K(n)}X.}
}$$
is an $E(n)$-localization of $X$. The statement in \cite[Proposition
2.2]{bauer} is more general and \cite{acb} work out far more general
local-to-global statements.

We always know from Proposition \ref{prop:knlocal} that the unit
map is a $K(n)$-local equivalence. The  chromatic square for
$\bthh(E(n))$ is: 
$$ \xymatrix{
{\bthh(E(n)) =L_{K(n) \vee E(n-1)}\bthh(E(n))} \ar[r]
\ar[d]& {L_{K(n)}\bthh(E(n))} \ar[d]\\
{L_{E(n-1)}\bthh(E(n))} \ar[r] & {L_{E(n-1)}(L_{K(n)}\bthh(E(n)))\,.}
}$$
The $K(n)$-homology of $\bthh(E(n))$ is zero by Proposition~\ref{prop:knlocal}. It follows that
the localization $L_{K(n)}\bthh(E(n))$ is trivial, and hence
$L_{E(n-1)}(L_{K(n)}\bthh(E(n)))$ is also trivial. 
Therefore the vertical map on the left hand side is an
equivalence and we obtain a nice example of blue-shift:
\begin{lem} \label{lem:cofenminus1} If $E(n)$ is an $E_3$-spectrum,
  then the cofiber $\bthh(E(n))$ is $E(n-1)$-local. 
\end{lem}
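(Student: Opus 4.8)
The plan is to extract the statement from the chromatic square for $\bthh(E(n))$ displayed above; the only genuine input is Proposition~\ref{prop:knlocal}, and once that is in hand the blue-shift is formal, provided we first know that $\bthh(E(n))$ is $E(n)$-local. To establish that locality, note that since $E(n)$ carries an $E_3$-structure, Lemma~\ref{lem:enlocal} shows $\thh(E(n))$ is $E(n)$-local; as $E(n)$ is tautologically $E(n)$-local and the $E(n)$-local spectra form a triangulated subcategory closed under cofibers, the cofiber sequence $E(n) \ra \thh(E(n)) \ra \bthh(E(n))$ shows $\bthh(E(n))$ is $E(n)$-local as well. Because $\langle E(n)\rangle = \langle K(0) \vee \ldots \vee K(n)\rangle = \langle K(n) \vee E(n-1)\rangle$, being $E(n)$-local coincides with being $K(n) \vee E(n-1)$-local, which is exactly the identification $\bthh(E(n)) \simeq L_{K(n) \vee E(n-1)}\bthh(E(n))$ in the top-left corner of the square; by the local-to-global results of \cite{acb} and \cite{bauer} the square is then a homotopy pullback.

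The decisive step is to collapse that pullback. Proposition~\ref{prop:knlocal} says the unit $E(n) \ra \thh(E(n))$ is a $K(n)$-local equivalence, so $K(n)_*\bthh(E(n)) = 0$ and $L_{K(n)}\bthh(E(n))$ is contractible; its further localization $L_{E(n-1)}L_{K(n)}\bthh(E(n))$ is therefore contractible too. With the entire right-hand column of the square contractible, the homotopy pullback reduces to the bottom-left corner, forcing the left vertical map $\bthh(E(n)) \ra L_{E(n-1)}\bthh(E(n))$ to be an equivalence. Hence $\bthh(E(n))$ is $E(n-1)$-local, the claimed blue-shift.

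I do not expect a serious obstacle, since every ingredient is already proved in the excerpt; the care needed is purely in the bookkeeping. One must check that the $E_3$-hypothesis is precisely what licenses $E(n)$-locality through Lemma~\ref{lem:enlocal}, match the Bousfield classes so that $E(n)$-localization is computed by this particular square, and appeal to \cite{acb} and \cite{bauer} to know the square is a genuine homotopy pullback rather than merely a commuting diagram. Granted those points, the vanishing of the $K(n)$-local corner, which is where Proposition~\ref{prop:knlocal} enters, does the rest.
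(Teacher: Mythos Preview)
Your proposal is correct and follows essentially the same argument as the paper: both use Lemma~\ref{lem:enlocal} to get $E(n)$-locality of $\bthh(E(n))$, invoke the chromatic fracture square via \cite{acb} and \cite{bauer}, and then collapse the right-hand column using Proposition~\ref{prop:knlocal}. Your write-up is slightly more explicit about why $\bthh(E(n))$ inherits $E(n)$-locality from the cofiber sequence, but the substance is identical.
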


\section{Topological Hochschild homology of $E(2)$} \label{sec:e2}
In this section, we discuss in more detail the topological Hochschild homology
of $E(2)$, which we will denote by $\etwo=E(2)$ to simplify the notation.
As explained in the proof of Lemma~\ref{lem:lambda}, the computations of
Theorem~\ref{thm:kithhen} for $E(2)$ can be expressed as follows:
\begin{align}\label{eq.5.12}
K(0)_*\thh(\etwo) & 
\cong K(0)_*\etwo \otimes \Lambda_\Q(dt_1, dt_2),\\
K(1)_*\thh(\etwo) &
\cong K(1)_*\etwo \otimes \Lambda_{\F_p}(dt_1),\\
K(2)_*\thh(\etwo) &
\cong K(2)_*\etwo.
\end{align}
Notice that these computations do not require the assumption that $\etwo$ is an
$E_3$-ring spectrum: for the rational case we have a commutative structure
anyhow, while in the $K(1)$ and $K(2)$ cases, the $E^2$ page of the B\"okstedt
spectral sequences is concentrated on columns 0 and 1 (respectively 0).
\begin{lem}\label{lem:lambda}
For $i=1,2$, there exist classes $\lambda_i\in \thh_{2p^i-1}(\etwo)$ with the
following properties. Under the Hurewicz homomorphism
\begin{itemize}
\itemr{a} the class $\lambda_i$ maps to $dt_i \in
K(0)_{2p^i-1}\thh(\etwo)$, for $i=1,2$; 
\itemr{b} the class $\lambda_1$ maps to $dt_1\in K(1)_{2p^2-1}\thh(\etwo)$.
\end{itemize}
\end{lem}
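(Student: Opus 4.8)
The plan is to produce the $\lambda_i$ from the McClure--Staffeldt suspension $\sigma\colon\Sigma\etwo\ra\thh(\etwo)$ of \eqref{sigma} applied to the generators $v_i\in\pi_{2p^i-2}\etwo$, and to read off their Hurewicz images from the B\"okstedt spectral sequence, whose edge homomorphism carries $\sigma(x)$ to the Hochschild differential $dx\in\HH_1$ on the $1$-line of the $E^\infty$-term. Since the Hurewicz map $\pi_*\etwo\ra K(i)_*\etwo$ sends $v_j$ to $\eta_R(v_j)=w_j$, the class $\sigma(v_j)\in\thh_{2p^j-1}(\etwo)$ is detected by $dw_j$. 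First I would dispatch part (a) and the class $\lambda_2$: rationally the spectral sequence collapses by \eqref{eq:thhenq}, $\sigma(v_i)$ is detected by $dw_i$, and under the rational identification of the $w_j$ with the cooperations $t_j$ from Section~2 this equals $dt_i$ up to a unit and a multiple of $dt_1$. Subtracting the appropriate multiple of $\sigma(v_1)$ and rescaling yields $\lambda_2$ mapping exactly to $dt_2$ in $K(0)_*\thh(\etwo)$, while a rescaled $\sigma(v_1)$ maps to $dt_1$ there; this gives part (a).

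The hard part is part (b), the $K(1)$-detection of $\lambda_1$, and here the naive candidate breaks down. By Proposition~\ref{prop:colimetale} we have $w_1=v_1$ in $K(1)_*\etwo$, so the Hurewicz image of $v_1$ lies in the ground ring $K(1)_*$ and $d(w_1)=0$; thus $\sigma(v_1)$ is $K(1)$-acyclic and cannot detect $dt_1$. The underlying reason is that the cooperation $t_1\in K(1)_{2p-2}\etwo$ is not in the image of the Hurewicz map $\pi_*\etwo\ra K(1)_*\etwo$: it is produced only through the \'etale relation $v_1t_1^p-v_1^pt_1+w_2=0$ of Proposition~\ref{prop:colimetale}, whose differential gives $dt_1=v_1^{-p}\,dw_2$. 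Hence $dt_1$ is witnessed $K(1)$-locally by the class $v_1^{-p}\sigma(v_2)$ but rationally by $\sigma(v_1)$ --- two genuinely different suspensions --- and a single integral homotopy class must realize both.

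To construct such a class I would exploit that $\bthh(\etwo)$ is $E(1)$-local (Lemma~\ref{lem:cofenminus1}) and assemble $\lambda_1$ from the chromatic fracture square
$$\bthh(\etwo)\simeq L_{K(1)}\bthh(\etwo)\times_{L_0L_{K(1)}\bthh(\etwo)}L_0\bthh(\etwo).$$
Because $v_1$ is invertible in $K(1)_*\etwo$ and $\thh(\etwo)$ is an $\etwo$-module (Lemma~\ref{lem:enlocal}), the element $v_1^{-p}\sigma(v_2)$ defines a class in $\pi_{2p-1}L_{K(1)}\bthh(\etwo)$ detecting $dt_1=v_1^{-p}dw_2$, while a rescaled $\sigma(v_1)$ gives the class in $\pi_{2p-1}L_0\bthh(\etwo)$ detecting $dt_1$ rationally. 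The main obstacle is to verify that these two elements have the same image in the corner $\pi_{2p-1}L_0L_{K(1)}\bthh(\etwo)$, for then the Mayer--Vietoris sequence of the square furnishes an integral lift $\lambda_1\in\pi_{2p-1}\bthh(\etwo)\subset\thh_{2p-1}(\etwo)$. This corner comparison is precisely where the rational and $K(1)$-local incarnations of $dt_1$ must be matched; once it is checked, naturality of the Hurewicz homomorphisms delivers both (a) and (b) for $\lambda_1$ at once.
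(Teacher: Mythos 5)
Your diagnosis of the difficulty is exactly right: $\sigma(v_1)$ has trivial $K(1)$-detection (since $\eta_R(v_1)=w_1=v_1$ lies in the ground ring $K(1)_*$), while $dt_1$ is seen $K(1)$-locally only through $v_1^{-p}dw_2$, so no single suspension class obviously does both jobs. But your proposed resolution stalls at precisely the point where the whole content of the lemma lies. The chromatic gluing requires showing that your rational class and your $K(1)$-local class have the same image in the corner $\pi_{2p-1}L_0L_{K(1)}\bthh(\etwo)$, and you leave this unverified (``once it is checked''). Nothing in your argument gives access to $\pi_*L_0L_{K(1)}\bthh(\etwo)$ or to the two maps into it; this corner comparison is not a routine check but is essentially equivalent to the transchromatic matching you are trying to prove, so the proof is incomplete at its decisive step. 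Two further structural problems: your construction produces at best a class in $\pi_{2p-1}\bthh(\etwo)$, whereas the lemma needs $\lambda_1\in\thh_{2p-1}(\etwo)$ with prescribed image in $K(i)_*\thh(\etwo)$, so you must still lift along $\thh(\etwo)\ra\bthh(\etwo)$ --- the connecting map lands in $\pi_{2p-2}\etwo\supset v_1\Z_{(p)}$ and need not vanish unless the unit splits; and your appeals to Lemma~\ref{lem:enlocal}, Lemma~\ref{lem:cofenminus1} and the $\etwo$-module structure import an $E_3$-hypothesis that the lemma, as proved in the paper, deliberately avoids (see the remark following its proof).

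There is also a quieter arithmetic error in part (a). The Hurewicz image of $v_1$ in $K(0)_*\etwo$ is $\eta_R(v_1)=pt_1$, since the left unit kills $v_1$ rationally; hence $\sigma(v_1)$ is detected by $p\,dt_1$, not $dt_1$. Likewise $\eta_R(v_2)=pt_2-p^pt_1^{p+1}$ rationally, so $\sigma(v_2)$ is detected by $p\,dt_2-p^p(p+1)t_1^p\,dt_1$. ``Rescaling'' to hit $dt_i$ on the nose therefore means dividing by $p$ (and worse) inside the $\Z_{(p)}$-module $\thh_*(\etwo)$, whose structure is unknown --- there is no reason such divisions exist. The paper's proof avoids all of these issues at once by not applying $\sigma$ to elements of $\pi_*\etwo$ at all: McClure--Staffeldt's integral computation $\thh_*(BP)\cong BP_*\otimes\Lambda_{\Z_{(p)}}(\lambda_i\,|\,i\geq 1)$ from \cite{mccs} provides classes $\lambda_i$ mapping to exactly $dt_i$ in $H\Z_*\thh(BP)$ (these are not the classes $\sigma(v_i)$), the lemma's classes are defined as their images under the natural map $\thh_*(BP)\ra\thh_*(\etwo)$, and both (a) and (b) then follow from naturality of the B\"okstedt spectral sequences for $BP$ and $\etwo$ --- no fracture-square gluing, no division by $p$, and no commutativity assumptions.
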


\begin{proof}
We use McClure-Staffeldt's computation of $\thh_*(BP)$
in~\cite[Remark~4.3]{mccs}, which has been validated by the proof~\cite{bm13} that $BP$
admits an $E_4$ structure. We briefly recall the computation. The integral,
rational and mod $p$ homology of $BP$ are given as
\begin{equation*}
H\Z_*BP\cong \Z_{(p)}[t_i\,|\,i\geq 1],
\ \ 
K(0)_*BP\cong \Q[t_i\,|\,i\geq 1]
\ \ \textup{ and }\ \ 
H{\F_p}_*BP\cong \Z[\bar\xi_i\,|\,i\geq 1],
\end{equation*}
where the class $t_i\in H\Z_{2p^i-1}BP$ maps to $\bar\xi_i$ under mod
$(p)$ reduction~\cite[Proof of Theorem 
5.2.8]{ravenelcob} and to the class with same name $t_i$ under rationalization.
The associated B\"okstedt spectral sequences collapse,
providing isomorphisms
\begin{align*}
H\Z_*\thh(BP) & 
\cong H\Z_*BP\otimes \Lambda_{\Z_{(p)}}(dt_i\,|\,i\geq1),\\
K(0)_*\thh(BP) & 
\cong K(0)_*BP\otimes \Lambda_{\Q}(dt_i\,|\,i\geq1)\ \ \textup{and}\\
H{\F_p}_*\thh(BP) & 
\cong H{\F_p}_*BP\otimes \Lambda_{\F_p}(d\bar\xi_i\,|\,i\geq1),
\end{align*}
with $dx=\sigma_*(x)$, where $\sigma\colon \Sigma BP\to\thh(BP)$ is the map
given in~\eqref{sigma}. 
There is an isomorphism
\begin{equation*}
\thh_*(BP)\cong BP_*\otimes \Lambda_{\Z_{(p)}}(\lambda_i\,|\,i\geq1),
\end{equation*}
and the Hurewicz homomorphism
\begin{equation*}
\thh_*(BP)\to H\Z_*\thh(BP)
\end{equation*}
is an inclusion mapping $\lambda_i$ to $dt_i$.
In particular, the classes $dt_i$ (integral and rational) and
$d\bar\xi_i$ are spherical: 
they are the image of $\lambda_i$ under the Hurewicz homomorphism mapping from
$\thh_*(BP)$.
For $i\geq 1$, let us define
\begin{equation*}
\lambda_i\in \thh_{2p^i-1}(\etwo)
\end{equation*}
as the image of the class with same name under the natural
map 
\begin{equation*}
\thh_*(BP)\to\thh_*(\etwo).
\end{equation*}
In the rational case, we have
\begin{equation*}
\eta_R(v_i)\equiv \alpha_it_i
\end{equation*}
modulo decomposables in $K(0)_*BP$, where $\alpha_i\in\Q$ is a unit.
We deduce that 
\begin{equation*}
K(0)_*\etwo\cong\Q[t_1,t_2][\eta_R(v_2)^{-1}]
\end{equation*}
and the B\"okstedt spectral sequence recovers
\begin{equation*}
K(0)_*\thh(\etwo)\cong K(0)_*\etwo\otimes \Lambda_\Q(dt_1, dt_2).
\end{equation*}
By naturality, comparing with the case of $BP$, we deduce that 
the Hurewicz homomorphism $\thh_*(\etwo)\to K(0)_*\thh(\etwo)$
maps $\lambda_i$ to $dt_i$.
\par
For $K(1)_*$-homology, we argue similarly, using the commutative square 
\begin{equation*}
\xymatrix{
	\thh_*(BP)\ar[r]\ar[d]&
	K(1)_*\thh(BP)\ar[d]\\
	\thh_*(\etwo)\ar[r]&
	K(1)_*\thh(\etwo).
}
\end{equation*}
We have $K(1)_*BP\cong K(1)_*[t_i\,|\,i\geq1]$, and
the B\"okstedt spectral sequence yields
\begin{equation*}
K(1)_*\thh(BP)\cong
K(1)_*BP\otimes \Lambda_{\F_p}(dt_i\,|\,i\geq1).
\end{equation*}
Comparing the B\"okstedt spectral sequences for $H{\Z}_*\thh(BP)$
and $K(1)_*\thh(BP)$, we deduce that the class $\lambda_1\in\thh_*(BP)$ maps to $dt_1\in
K(1)_*\thh(BP)$.
Recall that 
\begin{equation*}
K(1)_*\etwo=K(1)_*[t_i\,|\,i\geq 1][\eta_R(v_2)^{-1}]/(\eta_R(v_j)\,|\,{j\geq3})
\end{equation*} 
is a colimit of {\'e}tale algebras over $K(1)_*[w_2, w_2^{-1}]$, 
where
\begin{equation*}
w_2=\eta_R(v_2)=v_1^pt_1-v_1t_1^p.
\end{equation*}
In particular $dw_2=v_1^pdt_1$, and the B\"okstedt spectral sequence provides
the formula given above for $K(1)_*\thh(\etwo)$.
Now obviously $dt_1\in K(1)_*\thh(BP)$ maps to $dt_1\in K(1)_*\thh(\etwo)$. 
This implies assertion (b) of the lemma.
\end{proof}
\begin{rem}
  Note that the above proof does not require the map $BP \ra E(n)$ to be an
  $E_3$-map. 
\end{rem}
The class $\lambda_1\in\thh_{2p-1}(\etwo)$ of Lemma~\ref{lem:lambda}
corresponds to a map
$\lambda_1\colon  S^{2p-1}\to \thh(\etwo)$.
Smashing with $\etwo$, using the $\etwo$-module structure of $\thh(\etwo)$
(assuming an $E_3$ structure on $\etwo$), 
and composing with
the cofiber $\thh(\etwo)\to\bthh(\etwo)$ of the unit,
we obtain a map
\begin{equation*}
	j_1\colon \Sigma^{2p-1}\etwo\cong \etwo\wedge S^{2p-1}\to
\etwo\wedge\thh(\etwo)\to\thh(\etwo)\to\bthh(\etwo).
\end{equation*}
In the same fashion, we obtain a map $j_2\colon \Sigma^{2p^2-1}\etwo\to
\bthh(\etwo)$ corresponding to the class $\lambda_2$.
\begin{lem}
The map $j_1$ factors through a map
\begin{equation*}
	\bar j_1\colon \Sigma^{2p-1}L_1\etwo\to\bthh(\etwo)
\end{equation*}
that is a $K(1)_*$-isomorphism, and whose cofiber $C(\bar j_1)$ is 
a rational spectrum.
\end{lem}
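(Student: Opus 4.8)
The plan is to obtain $\bar j_1$ formally from $E(1)$-localization, to check the $K(1)_*$-isomorphism claim by a Künneth computation, and to recognize the cofiber via the $n=1$ chromatic fracture square. I begin with the factorization. Since $\etwo$ is assumed to be an $E_3$-spectrum, Lemma~\ref{lem:enlocal} gives $\thh(\etwo)$ the structure of an $\etwo$-module (so that $j_1$ is even defined), and Lemma~\ref{lem:cofenminus1} tells us that $\bthh(\etwo)$ is $E(1)$-local. The localization map $\ell\colon\Sigma^{2p-1}\etwo\to\Sigma^{2p-1}L_1\etwo$ is an $E(1)$-localization, so every map from $\Sigma^{2p-1}\etwo$ into an $E(1)$-local spectrum factors uniquely through $\ell$. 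Applying this to $j_1$ produces $\bar j_1\colon\Sigma^{2p-1}L_1\etwo\to\bthh(\etwo)$ with $j_1=\bar j_1\circ\ell$.

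Next I would prove that $\bar j_1$ is a $K(1)_*$-isomorphism. From the formula $K(1)_*\thh(\etwo)\cong K(1)_*\etwo\otimes_{\F_p}\Lambda_{\F_p}(dt_1)$ and the fact that the unit $\etwo\to\thh(\etwo)$ realizes the inclusion of the degree-zero part of the exterior algebra (this is the retraction used to split $\thh$), the long exact sequence of the cofiber sequence gives $K(1)_*\bthh(\etwo)\cong K(1)_*\etwo\cdot dt_1$, a free $K(1)_*\etwo$-module of rank one on a generator of degree $2p-1$. Because $K(1)$ is a field spectrum the Künneth isomorphism holds, and all four maps in the composite defining $j_1$ are $\etwo$-module maps; tracing the free generator of $K(1)_*(\Sigma^{2p-1}\etwo)\cong\Sigma^{2p-1}K(1)_*\etwo$ through $\id\wedge\lambda_1$, the action, and the projection, and using Lemma~\ref{lem:lambda}(b) to identify the image of $\lambda_1$ with $dt_1$, shows that it is sent to $1\cdot dt_1=dt_1$. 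By $K(1)_*\etwo$-linearity, $K(1)_*j_1$ carries a free generator to a free generator, hence is an isomorphism. Finally, the fiber of $\etwo\to L_1\etwo$ is $E(1)$-acyclic, hence $K(1)$-acyclic since $\langle E(1)\rangle=\langle K(0)\rangle\vee\langle K(1)\rangle$; thus $K(1)_*\ell$ is an isomorphism, and from $j_1=\bar j_1\circ\ell$ I conclude that $K(1)_*\bar j_1$ is an isomorphism.

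It remains to see that $C(\bar j_1)$ is rational. Both source and target of $\bar j_1$ are $E(1)$-local, the source by construction and the target by Lemma~\ref{lem:cofenminus1}, so the cofiber $C(\bar j_1)$ is $E(1)$-local as the $E(1)$-local category is closed under cofibers. Since $\bar j_1$ is a $K(1)_*$-isomorphism we have $K(1)_*C(\bar j_1)=0$, so $L_{K(1)}C(\bar j_1)\simeq\ast$. Feeding this into the chromatic fracture square for $n=1$, the left vertical map is the identity on the already $E(1)$-local spectrum $C(\bar j_1)$ while the $K(1)$-column is contractible, so the pullback collapses to $C(\bar j_1)\simeq L_0 C(\bar j_1)$; that is, $C(\bar j_1)$ is rational.

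The main obstacle is the middle step: one must carefully push the $\etwo$-module action and the projection onto the cofiber through the Künneth isomorphism to confirm that $K(1)_*j_1$ sends a module generator to the module generator $dt_1$. Once this is secured, the factorization through $\ell$ and the rationality of the cofiber are formal consequences of $E(1)$-locality and the fracture square.
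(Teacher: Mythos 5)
Your proposal is correct, and its overall skeleton (factor $j_1$ through $L_1\etwo$ using the $E(1)$-locality of $\bthh(\etwo)$ from Lemma~\ref{lem:cofenminus1}; establish a $K(1)_*$-isomorphism; deduce rationality of the cofiber from the $n=1$ fracture square) coincides with the paper's. Where you genuinely diverge is in the key middle step. The paper post-composes $\bar j_1$ with the $K(1)_*$-isomorphism $\bar\ell\colon\bthh(\etwo)\to\bthh(L_1\etwo)$ and then reads off the composite in the $K(1)$-based B\"okstedt spectral sequence for $L_1\etwo$, arguing that ${j_1}_*(\Sigma^{2p-1}1)$ is represented \emph{modulo lower filtration} by the permanent cycle $dt_1$ and invoking $K(1)_*\etwo$-linearity of the spectral sequence. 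You instead pre-compose with the localization map $\Sigma^{2p-1}\etwo\to\Sigma^{2p-1}L_1\etwo$ (a $K(1)_*$-isomorphism, since $\langle E(1)\rangle=\langle K(0)\vee K(1)\rangle$) and compute $K(1)_*j_1$ exactly: the K\"unneth isomorphism for the field spectrum $K(1)$, Lemma~\ref{lem:lambda}(b), and $K(1)_*\etwo$-linearity of the $\etwo$-module maps in the composite show that a free generator goes precisely to $dt_1$, the generator of $K(1)_*\bthh(\etwo)\cong K(1)_*\etwo\{dt_1\}$. Your route avoids the auxiliary spectrum $\bthh(L_1\etwo)$ and all filtration bookkeeping, at the cost of needing the identification of $K(1)_*\bthh(\etwo)$ from the long exact sequence; the paper's route keeps everything inside the spectral-sequence formalism. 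Both rest on the same inputs, namely the computation \eqref{eq.5.12} and Lemma~\ref{lem:lambda}(b).

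One small inaccuracy to fix: your parenthetical justification that the unit $\etwo\to\thh(\etwo)$ includes the filtration-zero part ``(this is the retraction used to split $\thh$)'' is off. A retraction $\thh(\etwo)\to\etwo$ is \emph{not} available under the $E_3$-hypothesis alone --- its existence is precisely the extra assumption of Corollary~\ref{cor:thhe2}. What you actually need, and what is true without any splitting, is that in the $K(1)$-based B\"okstedt spectral sequence the unit map hits exactly the filtration-zero line $E^2_{0,*}=K(1)_*\etwo$, so that it induces a split injection of $K(1)_*\etwo$-modules and the long exact sequence degenerates to give $K(1)_*\bthh(\etwo)\cong K(1)_*\etwo\{dt_1\}$. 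With that justification substituted, your argument is complete.
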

\begin{proof}
Recall from Lemma~\ref{lem:cofenminus1} that the cofiber $\bthh(\etwo)$
of the unit map is $E(1)$-local. In particular, the map $j_1$ factors through a
map
\begin{equation*}
	\bar j_1\colon \Sigma^{2p-1}L_1\etwo\to\bthh(\etwo).
\end{equation*}
The localization map $\etwo\to L_1\etwo$
is a $K(1)_*$-isomorphism, and therefore so are the induced maps
$\ell\colon \thh(\etwo)\to\thh(L_1\etwo)$ and
$\bar\ell\colon \bthh(\etwo)\to\bthh(L_1\etwo)$, by convergence of the 
$K(1)$-based B\"okstedt spectral sequence. Hence, to prove the claim, it
suffices to show that the composition 
\begin{equation}\label{eq:comp}
\Sigma^{2p-1}L_1\etwo\xr{\bar j_1}\bthh(\etwo)\xr{\bar\ell}\bthh(L_1\etwo)
\end{equation}
is a $K(1)_*$-isomorphism. The $K(1)$-based B\"okstedt spectral sequence
for $L_1\etwo$ is identical to the one of $\etwo$, computed above as
\begin{equation*}
	E^2_{*,*}=K(1)_*\etwo\otimes\Lambda_{\F_p}(dt_1) \Rightarrow
        K(1)_*\thh(\etwo),	
\end{equation*}
where $K(1)_*\etwo$ is in filtration degree zero and $K(1)_*\etwo\{dt_1\}$ is in
filtration degree $1$, 
and where all differentials are zero.
By definition of the map $j_1$, if $1\in K(1)_0\etwo$ is the unit, then
${j_1}_*(\Sigma^{2p-1}1)$ is represented modulo lower filtration by the permanent
cycle $dt_1$ in $E^2_{1,*}$. Since this is a spectral sequence of
$K(1)_*\etwo$-modules, 
the composition $\eqref{eq:comp}$ induces a map in $K(1)$ homology that is
represented modulo lower filtration by the isomorphism 
$\Sigma^{2p-1}K(1)_*\etwo\to E^2_{1,*}=K(1)_*\etwo\{dt_1\}$
sending a class $\Sigma^{2p-1}w$ to $wdt_1$. 
It is therefore a
$K(1)_*$-isomorphism, proving the claim.

Now we consider the cofiber $C(\bar j_1)$ of $\bar j_1$, sitting in an
exact triangle 
\begin{equation}\label{eq:triangle}
	\Sigma^{2p-1}L_1\etwo\xr{\bar j_1}\bthh(\etwo)\xr{k} C(\bar
        j_1)\xr{\delta} \Sigma^{2p}L_1\etwo.
\end{equation}
Since $\bar j_1$ is a $K(1)_*$-isomorphism, we know that $K(1)_*C(\bar
j_1)=0$, and since 
$\bthh(\etwo)$ and thus $C(\bar j_1)$ are $E(1)$-local, we deduce (as in 
Lemma~\ref{lem:cofenminus1}) that $C(\bar j_1)$ is $E(0)$-local (\ie, rational).
\end{proof}
We now define a map $\lambda_{12}\colon L_0S^{2p^2-2p-2}\to C(\bar j_1)$ as a
composition over the cofibers
\begin{equation*}
L_0S^{2p^2-2p-2}\to L_0\thh(E) \to L_0\bthh(E)\to C(\bar j_1), 
\end{equation*}
where the first map above realizes the class $dt_1dt_2\in K(0)_*\thh(E)$.
Smashing $\lambda_{12}$ with $E$ and using the module structure we obtain a map
\begin{equation*}
	j_{12}\colon \Sigma^{2p^2-2p-2}L_0E\to C(\bar j_1). 
\end{equation*}
Similarly, $\lambda_2$ induces a map
\begin{equation*}
	j_{2}\colon \Sigma^{2p^2-1}L_0E\to C(\bar j_1). 
\end{equation*}
\begin{thm} \label{thm:thhofe2}
Let $p$ be an odd prime such that $\etwo=E(2)$, the second Johnson-Wilson
spectrum at $p$, is an $E_3$-ring spectrum.
Then the map $j_2\vee j_{12}$ lifts to a map 
\begin{equation*}
\bar j_2\vee\bar j_{12}\colon \Sigma^{2p^2-1}L_0E\vee\Sigma^{2p^2-2p-2}L_0E\to
\bthh(E)
\end{equation*}
and the sum $\beta$ of $\bar j_1$, $\bar j_2$ and $\bar j_{12}$
is a weak equivalence of $E$-modules
\begin{equation*}
\beta\colon 
\Sigma^{2p-1}L_1\etwo\vee
\Sigma^{2p^2-1}L_0\etwo\vee
\Sigma^{2p^2+2p-2}L_0\etwo\to
\bthh(\etwo).
\end{equation*}
\end{thm}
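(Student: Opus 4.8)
The plan is to reduce the statement to two homology computations after first producing the lifts. First note that both the source and the target of $\beta$ are $E(1)$-local: the target $\bthh(E)$ is $E(1)$-local by Lemma~\ref{lem:cofenminus1}, the summand $L_1E$ is $E(1)$-local by construction, and each $L_0E$ is rational, hence $E(0)$-local and a fortiori $E(1)$-local. Since $\langle E(1)\rangle=\langle K(0)\vee K(1)\rangle$, a map between $E(1)$-local spectra is a weak equivalence as soon as it induces isomorphisms on $K(0)_*$ and on $K(1)_*$. Thus, once the lifts $\bar j_2$ and $\bar j_{12}$ are constructed, it will remain only to verify these two isomorphisms.

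To construct the lifts I would show that the connecting map $\delta$ of the triangle~\eqref{eq:triangle} is null; this splits the triangle as $\bthh(E)\simeq\Sigma^{2p-1}L_1E\vee C(\bar j_1)$ and lets me define $\bar j_2$ and $\bar j_{12}$ by composing $j_2$ and $j_{12}$ with a section of $k$. To see $\delta=0$, I rationalize~\eqref{eq:triangle}: the map $(\bar j_1)_\Q$ realizes the class $dt_1$ (Lemma~\ref{lem:lambda}(a)), which is a free generator of $\pi_*\bthh(E)_\Q$, so $(\bar j_1)_\Q$ is a split monomorphism on homotopy and the rationalized connecting map $L_0(\delta)$ vanishes. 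As $C(\bar j_1)$ is already rational, this says that the composite $C(\bar j_1)\xr{\delta}\Sigma^{2p}L_1E\to\Sigma^{2p}L_0E$ is null, so $\delta$ factors through the fiber $\Sigma^{2p}M_1E$ of $L_1E\to L_0E$, where $M_1E=\mathrm{fib}(L_1E\to L_0E)$. The monochromatic layer $M_1E$ is $E(1)$-local with trivial rationalization, hence $K(1)$-local; since $C(\bar j_1)$ is rational and therefore $K(1)$-acyclic, every map $C(\bar j_1)\to\Sigma^{2p}M_1E$ factors through $L_{K(1)}C(\bar j_1)=0$ and is null. Hence $\delta=0$ and the lifts exist.

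For the $K(1)_*$-isomorphism, recall that $\bar j_1$ is a $K(1)_*$-isomorphism onto $K(1)_*\bthh(E)\cong K(1)_*E\{dt_1\}$, while the two summands $\Sigma^{2p^2-1}L_0E$ and $\Sigma^{2p^2+2p-2}L_0E$ are rational and therefore $K(1)$-acyclic. Consequently $K(1)_*\beta$ agrees with $K(1)_*\bar j_1$ and is an isomorphism. For the $K(0)_*$-isomorphism, both sides are free modules over $K(0)_*E$: using~\eqref{eq.5.12} together with the splitting of the unit, $K(0)_*\bthh(E)\cong K(0)_*E\{dt_1,dt_2,dt_1dt_2\}$, while the source contributes free generators in degrees $2p-1$, $2p^2-1$ and $2p^2+2p-2$. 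By Lemma~\ref{lem:lambda}(a) the maps $\bar j_1$ and $\bar j_2$ send their generators to $dt_1$ and $dt_2$ with unit coefficient, and by construction $\bar j_{12}$ sends its generator to $dt_1dt_2$ with unit coefficient. Ordering the generators by degree, the matrix of $K(0)_*\beta$ is triangular with unit diagonal entries, hence invertible, so $K(0)_*\beta$ is an isomorphism and $\beta$ is a weak equivalence.

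The main obstacle is the vanishing of $\delta$, i.e. the existence of the lifts: one must rule out a nontrivial $K(1)$-local contribution to the connecting map, since a naive chromatic fracture argument leaves an a priori nonzero contribution from maps between rational spectra. The argument above circumvents this by isolating the obstruction in the $K(1)$-local monochromatic layer $M_1E$ and killing it using that $C(\bar j_1)$ is rational, hence $K(1)$-acyclic. The two homology verifications are then routine bookkeeping with the generators $dt_1$, $dt_2$ and $dt_1dt_2$.
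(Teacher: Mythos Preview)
Your overall strategy---construct the lifts and then verify that $\beta$ is a $K(0)_*$- and $K(1)_*$-isomorphism between $E(1)$-local spectra---is exactly right, and the two homology checks at the end are correct and match the paper's reasoning.

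The gap is in your construction of the lifts. You aim for the stronger statement $\delta=0$, and your argument hinges on the claim that ``$M_1E$ is $E(1)$-local with trivial rationalization, hence $K(1)$-local''. This implication is false. An $E(1)$-local spectrum with vanishing rationalization need not be $K(1)$-local: the monochromatic layer $M_1S=\mathrm{fib}(L_1S\to S_\Q)$ is $E(1)$-local with $L_0M_1S\simeq *$, but $M_1S$ is not $K(1)$-local, since $L_{K(1)}M_1S\simeq L_{K(1)}S$ and these two spectra have very different homotopy groups (divisible versus $p$-complete). More generally, the functors $M_n$ and $L_{K(n)}$ give an equivalence between monochromatic and $K(n)$-local categories, but $M_nX$ itself is essentially never $K(n)$-local. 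So the step ``every map $C(\bar j_1)\to\Sigma^{2p}M_1E$ factors through $L_{K(1)}C(\bar j_1)=0$'' is unjustified.

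The paper does not attempt to prove $\delta=0$ directly. Instead it shows the weaker statement $\delta\circ(j_2\vee j_{12})=0$, which suffices for the lifts, by mapping into the chromatic fracture pullback for $\Sigma^{2p}L_1E$: the composite into $\Sigma^{2p}L_{K(1)}E$ vanishes because the source is $K(1)$-acyclic and the target $K(1)$-local (so the mapping spectrum is contractible, not merely $\pi_0$-trivial), and the composite into $\Sigma^{2p}L_0E$ vanishes because, by construction, $j_2\vee j_{12}$ factors through $L_0\bthh(E)\to C(\bar j_1)$, so one is looking at two consecutive maps in the rationalized triangle~\eqref{eq:triangle}. Your argument can be repaired along these lines: replace the fiber sequence $M_1E\to L_1E\to L_0E$ by the fracture square, and use the contractibility of $F(C(\bar j_1),L_{K(1)}E)$ rather than a (false) $K(1)$-locality of $M_1E$.
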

\begin{proof}
The composition $\delta\circ(j_2\vee j_{12})$ is trivial, so that
$ j_2\vee  j_{12}$ lifts to a map $\bar j_2\vee \bar j_{12}$:
\begin{equation*}
\xymatrix{ 
& 
\Sigma^{2p^2-1}L_0\etwo\vee \Sigma^{2p^2+2p-2}L_0\etwo
\ar@{.>}[dl]_{\bar j_2\vee \bar j_{12}}
\ar[d]^{j_2\vee j_{12}}
\ar[dr]^{\simeq *}
&
\\
\bthh(\etwo)
\ar[r]^-{k}
&
C(\bar j_1)
\ar[r]^{\delta}
&
\Sigma^{2p}L_1E.}
\end{equation*}
Indeed, $\Sigma^{2p} L_1\etwo$ fits in the chromatic fracture pullback diagram 
\begin{equation*}
\xymatrix{
	\Sigma^{2p} L_1\etwo
	\ar[r]
	\ar[d]
	& 
	\Sigma^{2p}L_{K(1)}\etwo
	\ar[d] 
	\\
	\Sigma^{2p}L_0\etwo
	\ar[r] 
	& 
	\Sigma^{2p}L_{0}( L_{K(1)} \etwo).
}
 \end{equation*}
The composition of $\delta\circ(j_2\vee j_{12})$ with the left
vertical map to $\Sigma^{2p}L_0\etwo$ is trivial, since it factors over 
the composition
\begin{equation*}
	L_0\bthh(\etwo) \xr{} L_0 C(\bar j_1) \xr{} \Sigma^{2p}L_{0} \etwo
\end{equation*}
of two consecutive maps in the ($E(0)$-localized) cofiber
sequence~\eqref{eq:triangle}. 
The composition of $\delta\circ(j_2\vee j_{12})$ with the top map to 
$\Sigma^{2p}L_{K(1)}\etwo$ is trivial as well; indeed, 
there is no non-trivial map from a $K(1)$-acyclic to a $K(1)$-local spectrum. 
This finishes the proof that $\delta\circ(j_2\vee j_{12})$ is trivial
and that the lift exists. We now define $\beta$ as the sum
\begin{equation*}
	\beta=\bar j_1\vee \bar j_2\vee \bar j_{12}\colon  \Sigma^{2p-1}L_1\etwo\vee\Sigma^{2p^2-1}
L_0\etwo\vee \Sigma^{2p^2+2p-2} 
L_0\etwo\to\bthh(\etwo).
\end{equation*}
Finally, we claim that $\beta$ is a $K(0)_*$-isomorphism:
this is analogous to the proof above that $\bar j_1$ is a $K(1)_*$-isomorphism,
working this time with the $K(0)$-based B\"okstedt spectral sequence.
Since $\beta$ is a $K(0)_*$- and a $K(1)_*$-isomorphism of $E(1)$-local
spectra, it is a weak equivalence.
\end{proof}
Assume now that in addition to $\etwo$ being an $E_3$-ring spectrum, 
the unit map $\etwo\to\thh(\etwo)$ splits in the homotopy category
(this holds for example if $\etwo$ is an $E_\infty$-ring spectrum). We
then have a weak 
equivalence of $E$-modules $\etwo \vee\bthh(\etwo)\to \thh(\etwo)$. 
On the other hand, summing $\beta$ with the identity of $\etwo$ gives a weak
equivalence
\begin{equation*}
\id\vee\beta\colon \etwo\vee
\Sigma^{2p-1}L_1\etwo\vee
\Sigma^{2p^2-1}L_0\etwo\vee
\Sigma^{2p^2+2p-2}L_0\etwo
\to
\etwo\vee
\bthh(\etwo).
\end{equation*}
This implies the following corollary of Theorem~\ref{thm:thhofe2}.
\begin{cor}\label{cor:thhe2}
Assume that $p$ is an odd prime, and that the second Johnson-Wilson spectrum
$\etwo=E(2)$ admits an $E_3$-structure.
If the unit map $\etwo\to\thh(\etwo)$ splits in the homotopy category, then 
the maps above provide a weak equivalence of $E$-modules
\begin{equation*}
\etwo\vee\Sigma^{2p-1}L_1\etwo\vee
\Sigma^{2p^2-1}L_0\etwo\vee
\Sigma^{2p^2+2p-2}L_0\etwo\to
\thh(\etwo).
\end{equation*}
\end{cor}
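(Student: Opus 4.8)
The plan is to derive this as a formal consequence of Theorem~\ref{thm:thhofe2} together with the splitting hypothesis, so that the argument reduces to assembling equivalences of $\etwo$-modules. First I would record the output of Theorem~\ref{thm:thhofe2}: the map
\[
\beta\colon \Sigma^{2p-1}L_1\etwo\vee\Sigma^{2p^2-1}L_0\etwo\vee\Sigma^{2p^2+2p-2}L_0\etwo \lra \bthh(\etwo)
\]
is a weak equivalence of $\etwo$-modules. Since $\etwo$ is assumed to be an $E_3$-ring spectrum, Lemma~\ref{lem:enlocal} ensures that $\thh(\etwo)$ is an $\etwo$-module, and the defining cofiber sequence $\etwo\ra\thh(\etwo)\ra\bthh(\etwo)$ is one of $\etwo$-modules; in particular $\bthh(\etwo)$ is an $\etwo$-module and $\beta$ is meaningful at the level of $\etwo$-modules.

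Next I would use the splitting hypothesis on the unit $\eta\colon \etwo\ra\thh(\etwo)$. A retraction of $\eta$ splits the cofiber sequence above and produces a weak equivalence of $\etwo$-modules $\etwo\vee\bthh(\etwo)\xrightarrow{\sim}\thh(\etwo)$, restricting to $\eta$ on the first summand and to a chosen section of the projection $\thh(\etwo)\ra\bthh(\etwo)$ on the second. Forming the wedge $\id_{\etwo}\vee\beta$, which is a weak equivalence of $\etwo$-modules because it is a wedge of such, and postcomposing with this splitting equivalence yields the asserted weak equivalence
\[
\etwo\vee\Sigma^{2p-1}L_1\etwo\vee\Sigma^{2p^2-1}L_0\etwo\vee\Sigma^{2p^2+2p-2}L_0\etwo\lra\thh(\etwo),
\]
which is the unit on the first summand and $\bar j_1,\bar j_2,\bar j_{12}$ (followed by the section) on the remaining three.

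The single point deserving attention is that the splitting of $\eta$ must be upgraded to an equivalence of $\etwo$-modules, not merely of spectra, in order for the composite with $\id_{\etwo}\vee\beta$ to be an $\etwo$-module map. This is immediate when $\etwo$ is $E_\infty$, since then the commutative $\etwo$-algebra augmentation $\thh(\etwo)\ra\etwo$ is $\etwo$-linear and retracts the unit, so the induced splitting $\thh(\etwo)\simeq\etwo\vee\bthh(\etwo)$ is already one of $\etwo$-modules. Under the bare $E_3$-hypothesis the point to verify is that the homotopy-category retraction of $\eta$ can be realized $\etwo$-linearly, equivalently that the $\etwo$-module projection $\thh(\etwo)\ra\bthh(\etwo)$ admits an $\etwo$-linear section; granting this, the rest of the argument is purely formal and I expect no further obstacle.
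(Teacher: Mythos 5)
Your argument is exactly the paper's own: the paper obtains the corollary by composing the splitting equivalence $\etwo\vee\bthh(\etwo)\to\thh(\etwo)$, coming from the hypothesis on the unit map, with the equivalence $\id\vee\beta$ supplied by Theorem~\ref{thm:thhofe2}. The one point you flag~---~that the homotopy-category splitting of the unit must be upgraded to an $\etwo$-linear one before the composite makes sense as a map of $\etwo$-modules~---~is real, and the paper glosses over it in just the same way, simply asserting that the splitting is ``a weak equivalence of $E$-modules''; so your caution identifies a gap in the paper's exposition rather than a defect of your argument relative to it.

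For completeness, here is how to close that point under the bare $E_3$ hypothesis. A retraction of $\eta\colon\etwo\to\thh(\etwo)$ in the homotopy category of spectra forces the connecting map $\partial\colon\bthh(\etwo)\to\Sigma\etwo$ of the cofiber sequence to be null-homotopic as a map of spectra (a split monomorphism in a triangulated category splits its triangle). Now Theorem~\ref{thm:thhofe2}, combined with the fact that $L_0$ and $L_1$ are smashing localizations, identifies $\bthh(\etwo)$ as an $\etwo$-module with the induced module $\etwo\wedge X$, where $X=\Sigma^{2p-1}L_1S^0\vee\Sigma^{2p^2-1}L_0S^0\vee\Sigma^{2p^2+2p-2}L_0S^0$. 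For an induced module, the forgetful map $[\etwo\wedge X,M]_{\etwo}\to[\etwo\wedge X,M]_{S}$ is injective, because composing it with restriction along the unit $X\to\etwo\wedge X$ gives the free-forgetful adjunction bijection $[\etwo\wedge X,M]_{\etwo}\cong[X,M]_{S}$. Hence $\partial$ is null as an $\etwo$-module map, the cofiber sequence splits $\etwo$-linearly, the projection $\thh(\etwo)\to\bthh(\etwo)$ acquires an $\etwo$-linear section, and the rest of your argument goes through verbatim.
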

\begin{rem}
Corollary~\ref{cor:thhe2} implies that 
\begin{itemize}
	\item[-] the $2^0$ summand of $K(2)_*\etwo$ in $K(2)_*\thh(\etwo)$
		indexed by $1$,
	\item[-] the $2^1$ summands of $K(1)_*\etwo$ in $K(1)_*\thh(\etwo)$
		indexed by $1$ and $dt_1$, 
	\item[-] the $2^2$ summands of $K(0)_*\etwo$ in $K(0)_*\thh(\etwo)$
		indexed by $1$, $dt_1$, $dt_2$ and $dt_1dt_2$
\end{itemize}
assemble, in $\thh(\etwo)$,  into 
\begin{itemize}
	\item[-] the $2^0$ summand  $\etwo$ indexed by $1$ and detected
		by $K(0)_*$, $K(1)_*$ and $K(2)_*$, 
	\item[-] the $2^1-2^0$ summand $L_1\etwo$ indexed by $dt_1$ and
		detected by $K(0)_*$ and $K(1)_*$, and
	\item[-] the $2^2-2^1$ summands $L_0\etwo$ indexed by
		$dt_2$ and $dt_1dt_2$ and detected by $K(0)_*$.
\end{itemize}
Notice that Bruner and Rognes~\cite{BRunpub} obtain very
similar computations for $K(i)_*\thh(\mathrm{tmf})$ for $i=0,1,2$,
where $\mathrm{tmf}$ denotes the connective
spectrum of topological modular form.
\end{rem}
We can picture the summands of $\thh(\etwo)$ in a $2$-dimensional cube
of local pieces (up to suspensions, where $E=L_2E$): 
\begin{center}
	\renewcommand{\arraystretch}{2.4}
	\begin{tabular*}{13em}{ccc}
	&\multicolumn{1}{c}{$1$}&\multicolumn{1}{c}{$dt_1$}\\
	\cline{2-3}
	\multicolumn{1}{r}{$1$\kern 4pt} & \multicolumn{1}{|c|}{$\etwo$} &\multicolumn{1}{c|}{$L_1\etwo$}\\
	\cline{2-3}
	\multicolumn{1}{r}{$dt_2$\kern 4pt} &\multicolumn{1}{|c|}{$L_0\etwo$}&\multicolumn{1}{c|}{$L_0\etwo$}\\
	\cline{2-3}
\end{tabular*}
\renewcommand{\arraystretch}{1}
\end{center}
\par\medskip
\noindent
We conjecture that this picture extends to describe a decomposition of
$\thh(E(n))$ into $2^n$ summands,  with summands placed in an $n$-dimensional
cube, where the $i$th edge has two coordinates $1$ and $dt_i$.   
We formulate this as follows. 
\begin{conj}
If $p$ is an odd prime such that $E(n)$ is a sufficiently commutative $S$-algebra,
then $\thh(E(n))$ decomposes as a sum of $2^n$ factors, namely $2^{n-i-1}$
suspended copies of  $L_iE(n)$ for each $0\leq i\leq n-1$, plus one copy of
$E(n)$. More precisely, the $L_iE(n)$ summands are indexed by the
$2^{n-i-1}$ monomial generators
\begin{equation*}
\omega\in\Lambda_\Q(dt_1,\dots,dt_{n-i-1})\{dt_{n-i}\}\subset
K(0)_*\thh(E(n)),
\end{equation*}
and the summand corresponding to such a monomial $\omega$ is
$\Sigma^{|\omega|}L_iE(n)$.	
\end{conj}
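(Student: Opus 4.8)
The plan is to build an explicit equivalence by first splitting the reduced part $\bthh(E(n))$ into its $2^n-1$ nontrivial summands and then adjoining the unit summand $E(n)$, exactly as in the passage from Theorem~\ref{thm:thhofe2} to Corollary~\ref{cor:thhe2}. Throughout I assume $E(n)$ carries at least an $E_3$-structure, so that $\thh(E(n))$ is an $E(n)$-module (Lemma~\ref{lem:enlocal}), its cofiber $\bthh(E(n))$ is $E(n-1)$-local (Lemma~\ref{lem:cofenminus1}), and the B\"okstedt spectral sequences of Theorem~\ref{thm:kithhen} are multiplicative over $K(i)_*E(n)$. For a nonempty monomial $\omega=dt_{j_1}\cdots dt_{j_r}$ with $j_1<\cdots<j_r$, write $i(\omega)=n-j_r$ for its \emph{level}; the count $\sum_{t=1}^{n}2^{t-1}=2^n-1$ shows that there are exactly $2^{n-i-1}$ monomials of each level $i$, matching the conjectured multiplicities. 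Since $\langle E(n-1)\rangle=\langle K(0)\vee\cdots\vee K(n-1)\rangle$, the entire problem reduces to producing a map
\[
\beta\colon \bigvee_{\omega}\Sigma^{|\omega|}L_{i(\omega)}E(n)\lra \bthh(E(n))
\]
that is a $K(i)_*$-isomorphism for every $0\leq i\leq n-1$: such a map between $E(n-1)$-local spectra is automatically a weak equivalence.

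The summand maps come from $\thh(BP)$. Generalizing Lemma~\ref{lem:lambda} to all $n$ and all Morava $K$-theories, the isomorphism $\thh_*(BP)\cong BP_*\otimes\Lambda_{\Z_{(p)}}(\lambda_i\mid i\geq1)$ together with naturality of the B\"okstedt spectral sequence along $BP\ra E(n)$ furnishes, for each monomial $\omega$, a class $\lambda_\omega\in\thh_*(E(n))$ whose Hurewicz image in $K(m)_*\thh(E(n))\cong K(m)_*E(n)\otimes\Lambda_{\F_p}(dt_1,\dots,dt_{n-m})$ is $\omega$ whenever $\omega$ survives there (\ie\ whenever $j_r\leq n-m$, equivalently $m\leq i(\omega)$) and is zero otherwise. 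Smashing $\lambda_\omega\colon S^{|\omega|}\ra\thh(E(n))$ with $E(n)$, using the module structure, and projecting to the cofiber yields a map $j_\omega\colon \Sigma^{|\omega|}E(n)\ra\bthh(E(n))$ that is detected in precisely the theories $K(0),\dots,K(i(\omega))$. This detection pattern is exactly what forces the $\omega$-summand to be $L_{i(\omega)}E(n)$, since $L_jE(n)$ has nonzero $K(m)$-homology iff $0\leq m\leq j$.

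To see that the $j_\omega$ descend to the correct localizations and assemble, I would run a downward chromatic induction on cofibers, peeling off one layer at a time as in the $n=2$ case. Set $C_0=\bthh(E(n))$. At stage $k$ (for $k=1,\dots,n$) assume inductively that $C_{k-1}$ is $E(n-k)$-local with
\[
K(n-k)_*C_{k-1}\cong K(n-k)_*E(n)\otimes\Lambda_{\F_p}(dt_1,\dots,dt_{k-1})\{dt_k\},
\]
so that only the level-$(n-k)$ monomials survive. Because $C_{k-1}$ is $E(n-k)$-local, each composite $\Sigma^{|\omega|}E(n)\ra\bthh(E(n))\ra C_{k-1}$ attached to such an $\omega$ factors canonically through $\Sigma^{|\omega|}L_{n-k}E(n)\ra C_{k-1}$ by the universal property of localization; their wedge is a $K(n-k)_*$-isomorphism onto $C_{k-1}$ by the module structure of the B\"okstedt spectral sequence, exactly as for $\bar j_1$ in the proof of Theorem~\ref{thm:thhofe2}. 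Its cofiber $C_k$ is then $K(n-k)$-acyclic and $E(n-k)$-local, hence $E(n-k-1)$-local, which closes the induction; at the final stage $C_n$ is rational and $K(0)$-acyclic, hence contractible. Unwinding this tower of cofiber sequences lifts all the maps $\bar j_\omega\colon \Sigma^{|\omega|}L_{i(\omega)}E(n)\ra\bthh(E(n))$ to the source, and their sum $\beta$ is by construction a $K(i)_*$-isomorphism for every $0\leq i\leq n-1$: in $K(i)$-homology the summands with $i(\omega)\geq i$ account for exactly the generators $\Lambda_{\F_p}(dt_1,\dots,dt_{n-i})$ of $K(i)_*\bthh(E(n))$.

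The main obstacle is the lifting step: showing that the maps constructed into the successive cofibers $C_{k-1}$ lift compatibly all the way back to $\bthh(E(n))$, equivalently that the relevant connecting maps vanish. For $n=2$ this was the single computation that $\delta\circ(j_2\vee j_{12})$ is null, handled by a chromatic fracture square together with the two vanishing principles ``there is no nonzero map from a $K(1)$-acyclic to a $K(1)$-local spectrum'' and ``a rational obstruction factors through two consecutive maps of a cofiber sequence.'' For general $n$ these obstructions form an iterated tower indexed by the chromatic layers, and each must be killed by an analogous fracture-square argument at the appropriate height. I expect that organizing this tower uniformly---in particular verifying that the factorizations through the various $L_{n-k}E(n)$ are mutually compatible as $k$ varies, and that an $E_3$-structure suffices to make all the multiplicative B\"okstedt comparisons go through---is the genuinely hard part, while the detection and counting are then formal. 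Finally, assuming in addition that the unit $E(n)\ra\thh(E(n))$ splits (\eg\ if $E(n)$ is $E_\infty$), adjoining $\id_{E(n)}$ to $\beta$ upgrades the splitting of $\bthh(E(n))$ to the asserted decomposition of $\thh(E(n))$ into $2^n$ summands.
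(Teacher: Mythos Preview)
The statement you are attempting to prove is labelled a \emph{Conjecture} in the paper and is left open: the paper provides no proof for general~$n$. What the paper does prove is the case $n=2$ (Theorem~\ref{thm:thhofe2} and Corollary~\ref{cor:thhe2}), and your proposal is precisely the natural inductive extension of that argument to arbitrary~$n$. So there is no ``paper's own proof'' to compare against; your sketch is in fact an outline of how one would hope to establish the conjecture.

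As a strategy, your outline is sound and you have correctly located the essential obstruction. The construction of the classes $\lambda_\omega$ via $\thh(BP)$, the chromatic peeling of $\bthh(E(n))$ into cofibers $C_0\to C_1\to\cdots$, and the observation that each $C_{k-1}$ is $E(n-k)$-local are all direct generalizations of Lemmas~\ref{lem:lambda}, \ref{lem:cofenminus1} and the proof of Theorem~\ref{thm:thhofe2}. You are also right that the genuine difficulty is the iterated lifting: for $n=2$ a single connecting map $\delta\circ(j_2\vee j_{12})$ had to be shown null, and this was done by one fracture-square argument; for general~$n$ there is a whole tower of such obstructions, each living at a different chromatic height, and it is not clear that the two vanishing principles used there (no maps from $K(j)$-acyclics to $K(j)$-locals, and factorization through consecutive maps of a cofiber sequence) suffice to kill all of them compatibly. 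This is exactly why the paper states the result as a conjecture rather than a theorem.

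One smaller point you pass over: your claim that the Hurewicz image of $\lambda_\omega$ in $K(m)_*\thh(E(n))$ is the monomial~$\omega$ (in the $dt_j$'s) for all relevant $m$ presupposes a change of basis from the generators $dw_{m+1},\dots,dw_n$ of Theorem~\ref{thm:hh} to $dt_1,\dots,dt_{n-m}$. In the paper this is only carried out for $n=2$, $m=1$ (where $dw_2=v_1^p\,dt_1$); for general $n$ and $m$ one must check, via the relations in the proof of Proposition~\ref{prop:colimetale}, that this change of basis is unit-triangular. This should be routine, but it is part of what ``generalizing Lemma~\ref{lem:lambda}'' entails.
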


\begin{bibdiv}
\begin{biblist}

\bib{ar05}{article}{
   author={Angeltveit, Vigleik},
   author={Rognes, John},
   title={Hopf algebra structure on topological Hochschild homology},
   journal={Algebr. Geom. Topol.},
   volume={5},
   date={2005},
   pages={1223--1290},
}

\bib{acb}{article}{
	author={Antol{\`i}n-Camarena, Omar}, 
author={Barthel, Tobias},
title={Chromatic fracture cubes},
note={Preprint},
eprint={https://arxiv.org/abs/1410.7271},
}

\bib{ausoni-thh}{article}{
   author={Ausoni, Christian},
   title={Topological Hochschild homology of connective complex $K$-theory},
   journal={Amer. J. Math.},
   volume={127},
   date={2005},
   number={6},
   pages={1261--1313},
}

\bib{br}{article}{
   author={Baker, Andrew},
   author={Richter, Birgit},
   title={On the $\Gamma$-cohomology of rings of numerical polynomials and
   $E_\infty$ structures on $K$-theory},
   journal={Comment. Math. Helv.},
   volume={80},
   date={2005},
   number={4},
   pages={691--723},
}

\bib{barwick}{article}{
  author={Barwick, Clark},
  title={From operator categories to higher operads},
  journal={Geom. Topol.},
  volume={22}
  date={2018},
  number={4},
  pages={1893--1959},
}

\bib{bm}{article}{
   author={Basterra, Maria},
   author={Mandell, Michael A.},
   title={Homology of $E_n$ ring spectra and iterated $\mathit{THH}$},
   journal={Algebr. Geom. Topol.},
   volume={11},
   date={2011},
   number={2},
   pages={939--981},
}

\bib{bm13}{article}{
   author={Basterra, Maria},
   author={Mandell, Michael A.},
   title={The multiplication on BP},
   journal={J. Topol.},
   volume={6},
   date={2013},
   number={2},
   pages={285--310},
}

\bib{bauer}{article}{
author={Bauer, Tilman},
title={Bousfield localization and the Hasse square},
book={
 title={Topological modular forms},
   series={Mathematical Surveys and Monographs},
   volume={201},
   editor={Douglas, Christopher L.},
   editor={Francis, John},
   editor={Henriques, Andr\'e G.},
   editor={Hill, Michael A.},
   publisher={American Mathematical Society, Providence, RI},},
   pages={112--121},
   date={2014},
}

\bib{Boe}{article}{
author={B{\"o}kstedt, Marcel},
title={The topological Hochschild homology of $\Z$ and of $\Z/p\Z$},
note={Unpublished preprint},
}

\bib{bfv}{article}{
   author={Brun, Morten},
   author={Fiedorowicz, Zbigniew},
   author={Vogt, Rainer M.},
   title={On the multiplicative structure of topological Hochschild
   homology},
   journal={Algebr. Geom. Topol.},
   volume={7},
   date={2007},
   pages={1633--1650},
}

\bib{BRunpub}{article}{
author={Bruner, Robert},
author={Rognes, John},
title={Topological Hochschild homology of topological modular forms},
note={notes available on John Rognes' webpage, see \url{https://folk.uio.no/rognes/papers/
ntnu08.pdf})},
}

\bib{ekmm}{book}{
   author={Elmendorf, A. D.},
   author={Kriz, I.},
   author={Mandell, M. A.},
   author={May, J. P.},
   title={Rings, modules, and algebras in stable homotopy theory},
   series={Mathematical Surveys and Monographs},
   volume={47},
   note={With an appendix by M. Cole},
   publisher={American Mathematical Society, Providence, RI},
   date={1997},
   pages={xii+249},
}

\bib{hl}{article}{
   author={Hill, Michael},
   author={Lawson, Tyler},
   title={Automorphic forms and cohomology theories on Shimura curves of
   small discriminant},
   journal={Adv. Math.},
   volume={225},
   date={2010},
   number={2},
   pages={1013--1045},
}

\bib{landweber}{article}{
   author={Landweber, Peter S.},
   title={${\rm BP}\sb\ast({\rm BP})$ and typical formal groups},
   journal={Osaka J. Math.},
   volume={12},
   date={1975},
   number={2},
   pages={357--363},
}

\bib{lawson}{article}{
author={Lawson, Tyler},
title={Secondary power operations and the Brown-Peterson spectrum at the prime 2},
journal={Annals of Math.},
volume={188},
date={2018},
number={2},
pages={513--576},
}

\bib{ln1}{article}{
   author={Lawson, Tyler},
   author={Naumann, Niko},
   title={Commutativity conditions for truncated Brown-Peterson spectra of
   height 2},
   journal={J. Topol.},
   volume={5},
   date={2012},
   number={1},
   pages={137--168},
}

\bib{loday}{book}{
   author={Loday, Jean-Louis},
   title={Cyclic homology},
   series={Grundlehren der Mathematischen Wissenschaften [Fundamental
   Principles of Mathematical Sciences]},
   volume={301},
   edition={2},
   note={Appendix E by Mar\'\i a O. Ronco;
   Chapter 13 by the author in collaboration with Teimuraz Pirashvili},
   publisher={Springer-Verlag, Berlin},
   date={1998},
   pages={xx+513},
}

\bib{mnn}{article}{
   author={Mathew, Akhil},
   author={Naumann, Niko},
   author={Noel, Justin},
   title={On a nilpotence conjecture of J. P. May},
   journal={J. Topol.},
   volume={8},
   date={2015},
   number={4},
   pages={917--932},
}

\bib{mccs}{article}{
   author={McClure, J. E.},
   author={Staffeldt, R. E.},
   title={On the topological Hochschild homology of $b{\rm u}$. I},
   journal={Amer. J. Math.},
   volume={115},
   date={1993},
   number={1},
   pages={1--45},
}

\bib{ravenelcob}{book}{
   author={Ravenel, Douglas C.},
   title={Complex cobordism and stable homotopy groups of spheres},
   series={Pure and Applied Mathematics},
   volume={121},
   publisher={Academic Press, Inc., Orlando, FL},
   date={1986},
   pages={xx+413},
}

\bib{r-bp}{article}{
  author={Richter, Birgit},
  title={A lower bound for coherences on the Brown-Peterson spectrum},
  journal={Algebr. Geom. Topol.},
  volume={6},
  date={2006},
  pages={287--308},  
}

\bib{rs}{article}{
   author={Richter, Birgit},
   author={Shipley, Brooke},
   title={An algebraic model for commutative $H\Bbb{Z}$-algebras},
   journal={Algebr. Geom. Topol.},
   volume={17},
   date={2017},
   number={4},
   pages={2013--2038},
}

\bib{robinson}{article}{
   author={Robinson, Alan},
   title={Gamma homology, Lie representations and $E_\infty$
   multiplications},
   journal={Invent. Math.},
   volume={152},
   date={2003},
   number={2},
   pages={331--348},
}

\bib{RobWh}{article}{
   author={Robinson, Alan},
   author={Whitehouse, Sarah},
   title={Operads and $\Gamma$-homology of commutative rings},
   journal={Math. Proc. Cambridge Philos. Soc.},
   volume={132},
   date={2002},
   number={2},
   pages={197--234},
}

\bib{rognes}{article}{
   author={Rognes, John},
   title={Galois extensions of structured ring spectra. Stably dualizable
   groups},
   journal={Mem. Amer. Math. Soc.},
   volume={192},
   date={2008},
   number={898},
   pages={viii+137},
}

\bib{senger}{article}{
author={Senger, Andrew},
title={The Brown-Peterson spectrum is not $E_{2(p^2+2)}$ at odd primes}, 
note={Preprint},
eprint={https://arxiv.org/abs/1710.09822},
}

\bib{shipley}{article}{
   author={Shipley, Brooke},
   title={$H\Z$-algebra spectra are differential graded algebras},
   journal={Amer. J. Math.},
   volume={129},
   date={2007},
   number={2},
   pages={351--379},
}

\bib{stonek}{article}{
author={Stonek, Bruno},
title={Higher topological Hochschild homology of periodic complex $K$-theory},
note={Preprint},
eprint={https://arxiv.org/abs/1801.00156},
}

\bib{wg}{article}{
   author={Weibel, Charles A.},
   author={Geller, Susan C.},
   title={\'Etale descent for Hochschild and cyclic homology},
   journal={Comment. Math. Helv.},
   volume={66},
   date={1991},
   number={3},
   pages={368--388},
}

\end{biblist}
\end{bibdiv}
\end{document}